\numberwithin{equation}{section}
\theoremstyle{definition} 
\newtheorem{df}{Definition}
\newtheorem{rem}[df]{Remark}
\theoremstyle{plain}
\newtheorem{thm}[df]{Theorem} 
\newtheorem{prop}[df]{Proposition}
\newtheorem{lemma}[df]{Lemma}
\DeclareMathOperator{\diam}{diam}
\DeclareMathOperator{\dist}{dist}
\DeclareMathOperator{\DET}{DET}
\DeclareMathOperator{\rdet}{rdet}
\DeclareMathOperator{\Per}{Per}
\DeclareMathOperator{\EPer}{EPer}
\DeclareMathOperator{\Orb}{Orb}
\newcommand{\mmod}{\operatorname{mod}}
\begin{document}

\hyphenation{cardi-nali-ty gene-ra-lize asym-pto-tic asser-tion quanti-fi-cation techni-ques gene-ra-lized conti-nuous con-figura-tion}


\title{Correlation sum and recurrence determinism for~interval maps}
\author{Michaela Mihokov\'{a}}

\address{Department of Mathematics, Faculty of Natural Sciences, Matej Bel University,
	Tajovsk\'{e}ho 40, 974 01 Bansk\'{a} Bystrica, Slovakia}

\email{michaela.mihokova@umb.sk}

\subjclass[2020]{Primary 37E05, 37B20; Secondary 28D20}




\keywords{Correlation sum, recurrence determinism, omega-limit set, solenoidal set, interval map.}

\begin{abstract}
	Recurrence quantification analysis is a method for measuring the complexity of dynamical systems. Recurrence determinism is a fundamental characteristic of it, closely related to correlation sum. In this paper, we study asymptotic behavior of these quantities for interval maps. We show for which cases the asymptotic correlation sum exists. An example of an interval map with zero entropy and a point with the finite $\omega$-limit set for which the asymptotic correlation sum does not exist is given. Moreover, we present formulas for computation of the asymptotic correlation sum with respect to the cardinality of the $\omega$-limit set or to the configuration of the intervals forming it, respectively. We also show that for a not Li-Yorke chaotic (and hence zero entropy) interval map, the limit of recurrence determinism as distance threshold converges to zero can be strictly smaller than one.
\end{abstract}

\maketitle

\thispagestyle{empty}

\section{Introduction}
A \emph{(discrete) dynamical system} is an ordered pair $(X,f)$ where $X$ is a compact metric space and $f\colon X\to X$ is a continuous -- not necessarily invertible -- map. For predictability of a trajectory of a dynamical system, one may use mathematical tools of recurrence quantification analysis (RQA) \cite{ZW92}. These methods quantify the number and duration of recurrences in dynamical systems, and are visually represented via recurrence plots \cite{EKR87}. The percentage of recurrence points in such plot is called recurrence rate, and it corresponds to the quantity named correlation sum. RQA and recurrence plots have wide applications in medicine, economy, biology, and many other areas; see, e.g., \cite{CVFGZ04, FA05, VR21}, and for a comprehensive overview of the subject, see \cite{MRTK07, WM15}. In this paper, we are focused on correlation sum and~recurrence determinism, one of fundamental characteristics of RQA.

\subsection{Correlation sum}
Recall that if $(X,f)$ is a dynamical system, $\varrho$ is the metric on~$X$ and $m\in\mathbb{N}$ (where $\mathbb{N}$ denotes the set of all positive integers), then \emph{Bowen's metric} $\varrho_m = \varrho^f_m$ on $X$ is defined by
$$\varrho_m(x,y) = \max\left\{\varrho\left(f^i(x), f^i(y)\right)\colon 0\leq i<m\right\}$$
for all $x,y\in X$. For $x\in X$, $\varepsilon>0$ and $n\in\mathbb{N}$, the \emph{correlation sum} of (the beginning of) the trajectory of a point $x$ (with respect to~$\varrho_m$) is 
\begin{equation*}
C_m(x, n, \varepsilon) =\frac1{n^2}\#\left\{(i,j)\colon\ 0\leq i,j<n,\quad \varrho_m\left(f^i(x),f^j(x)\right)\leq\varepsilon\right\}.
\end{equation*}
This quantity is the relative frequency of recurrences occurring in the initial segment of the trajectory of $x$ with respect to the closeness defined by the threshold $\varepsilon$ and the metric $\varrho_m$. The \emph{lower} and \emph{upper asymptotic correlation sums} (with respect to~$\varrho_m$) are
\begin{equation*}
\underline{c}_m(x,\varepsilon)=\liminf\limits_{n\to\infty}C_m(x,n,\varepsilon),\hspace{0.5cm} \hspace{0.5cm}
\overline{c}_m(x,\varepsilon)= \limsup\limits_{n\to\infty}C_m(x,n,\varepsilon).
\end{equation*}
If $\underline{c}_m(x,\varepsilon)=\overline{c}_m(x,\varepsilon)$, i.e., if $\lim_{n\to\infty}C_m(x,n,\varepsilon)$ exists, we say that the \emph{asymptotic correlation sum} exists and we denote this limit by $c_m(x,\varepsilon)$.

In~\cite{Pes93} (see also \cite[Theorem~17.1]{Pes97}), the author proved that if $\mu$ is an $f$-ergodic measure, then correlation sums of $\mu$-almost every point $x\in X$ converges (as $n$ goes to $\infty$) to the \emph{correlation integral}
\begin{equation*}
	c_m\left(\mu,\varepsilon\right) = \mu\times\mu\left\{(y,z)\in X\times X\colon \varrho_m(y,z)\leq\varepsilon\right\}
\end{equation*}
for all but countably many $\varepsilon>0$. As a consequence we have that $\underline{c}_m(x,\varepsilon) = \overline{c}_m(x,\varepsilon)$ for $\mu$-almost every point $x\in X$ and for all but countably many $\varepsilon>0$. The~present paper deals with the question whether the previous equality holds also for \emph{every} $x\in X$ with $X=I$ being a compact unit interval. A similar question was studied in~\cite{Spi19}, however, there it was done for $m=\infty$. Our results are stated with respect to~$\omega$-limit sets $\omega_f(x)$ and are summarized in the following theorem (for the definition of solenoidal set, see Section~\ref{S:solenoidal}). Here, by an \emph{order-preserving metric} on a real interval we mean a metric $\varrho$ such that $\max\left\{\varrho(x,y), \varrho(y,z)\right\}< \varrho\{x,z\}$ for all distinct points $x,y,z\in X$ with $x<y<z$.

\setcounter{df}{0}
\renewcommand{\thedf}{\Alph{df}}
\begin{thm}\label{Thm:main_c}
	Let $I$ be equipped with an order-preserving metric compatible with the Euclidean topology. Let $(I,f)$ be a dynamical system and $x\in I$. Then $$\underline{c}_m(x,\varepsilon) = \overline{c}_m(x,\varepsilon)$$
	\begin{enumerate}
		\item\label{it:Main_c:solenoidal} for every $m\in\mathbb{N}$ and $\varepsilon>0$ if $\omega_f(x)$ is solenoidal; 
		\item\label{it:Main_c:finite} for every $m\in\mathbb{N}$ and all but finitely many $\varepsilon>0$ if $\omega_f(x)$ is finite.
	\end{enumerate}
\end{thm}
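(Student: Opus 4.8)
The plan is to analyze the asymptotic behavior of $C_m(x,n,\varepsilon)$ by reducing the dynamics on the trajectory of $x$ to the dynamics on its $\omega$-limit set. Since $\omega_f(x)$ attracts the trajectory, for large $n$ almost all of the points $f^i(x)$ lie arbitrarily close to $\omega_f(x)$, so the counting of $\varrho_m$-close pairs should be governed by how the trajectory distributes itself among the points (or pieces) of $\omega_f(x)$. The first step is to set up this reduction carefully: I would fix $\varepsilon$ and $m$, and show that the pairs $(i,j)$ contributing to the correlation sum are, up to a vanishing fraction, exactly those for which $f^i(x)$ and $f^j(x)$ shadow the same or $\varrho_m$-close points of the limit set.

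For part (\ref{it:Main_c:finite}), when $\omega_f(x)$ is finite it is a single periodic orbit, say of period $p$, with points $z_0<z_1<\dots<z_{p-1}$. The key observation is that once the trajectory is close enough to this orbit, each $f^i(x)$ is near exactly one $z_k$, and because $\varrho$ is order-preserving and compatible with the Euclidean topology, for all but finitely many $\varepsilon$ the relation $\varrho_m(f^i(x),f^j(x))\leq\varepsilon$ depends only on which periodic points the two indices shadow (the finitely many bad $\varepsilon$ are the threshold values $\varrho_m(z_k,z_\ell)$ themselves, where the inequality is borderline). Thus the correlation sum is determined by the asymptotic frequencies with which the trajectory visits each residue class modulo $p$. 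The main work here is to prove that these visitation frequencies converge: since the orbit is a single periodic cycle, the trajectory eventually enters the cyclically permuted neighborhoods in a fixed order, forcing the fraction of the first $n$ iterates landing near each $z_k$ to converge to $1/p$. Combined with the $\varepsilon$-robustness, this yields convergence of $C_m(x,n,\varepsilon)$.

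For part (\ref{it:Main_c:solenoidal}), the argument must hold for every $\varepsilon>0$, which is more delicate because one cannot discard exceptional thresholds. Here the solenoidal structure is essential: $\omega_f(x)$ carries a minimal, uniquely ergodic dynamics arising from an inverse limit of periodic orbits (adding-machine type), and the trajectory is asymptotic to this minimal set. The plan is to exploit unique ergodicity, which gives uniform convergence of Birkhoff averages, to control the joint visitation frequencies of pairs of indices. Concretely, I would express $C_m(x,n,\varepsilon)$ as a Birkhoff-type average of the indicator of the $\varrho_m$-ball relation over the product system, and invoke unique ergodicity (or the hierarchical periodic approximation of the solenoid) to force the limit to exist for every $\varepsilon$, since the invariant measure assigns no mass to the boundary of a $\varrho_m$-ball in a uniquely ergodic minimal system with no atoms on the relevant level.

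I expect the main obstacle to be two-fold. In the finite case, the subtlety is isolating exactly which finitely many $\varepsilon$ must be excluded and showing robustness of the counting away from them; this is essentially handled by the order-preserving hypothesis but requires care about the interaction between the finitely many distances $\varrho_m(z_k,z_\ell)$ and the threshold. In the solenoidal case, the real difficulty is upgrading almost-every-$\varepsilon$ convergence (which unique ergodicity would typically give outside a countable exceptional set) to \emph{every} $\varepsilon$: I would need to argue that the self-similar, infinitely-refined structure of the solenoid prevents any positive-measure concentration at a single distance level, so that no exceptional thresholds arise. Managing the transient part of the trajectory before it approaches $\omega_f(x)$, and showing it contributes a vanishing fraction to $C_m(x,n,\varepsilon)$ as $n\to\infty$, is a routine but necessary preliminary in both cases.
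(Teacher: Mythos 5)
Your argument for part~(2) is essentially the paper's own proof of Theorem~\ref{Thm:finite_c}: the finite $\omega$-limit set is a single periodic orbit with $f^{pn+i}(x)\to y_i$, the excluded thresholds are exactly the finitely many values $\varrho_m(y_i,y_j)$, and away from these a triangle-inequality argument shows that eventually $\varrho_m\left(f^{pk+i}(x),f^{pl+j}(x)\right)\le\varepsilon$ holds if and only if $\varrho_m(y_i,y_j)<\varepsilon$, while the visit frequencies to the residue classes are automatically $1/p$. One correction: the order-preserving hypothesis plays no role whatsoever in this case (the paper states Theorem~\ref{Thm:finite_c} for an arbitrary metric), so your appeal to it there is misplaced.

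For part~(1) you take a genuinely different, ergodic-theoretic route: write $C_m(x,n,\varepsilon)=(\mu_n\times\mu_n)(B_\varepsilon)$ with $\mu_n=\frac1n\sum_{i<n}\delta_{f^i(x)}$ and $B_\varepsilon=\{(y,z)\colon\varrho_m(y,z)\le\varepsilon\}$, obtain $\mu_n\to\mu$ from unique ergodicity, and conclude via portmanteau provided $\mu\times\mu$ gives no mass to the set of pairs at exactly distance $\varepsilon$. The skeleton is viable, but both key inputs are gaps as stated. First, a solenoidal $\omega_f(x)$ need \emph{not} be minimal (it contains, but may strictly contain, the unique minimal subset of $Q$), and unique ergodicity of $f$ restricted to $\omega_f(x)$ is a theorem requiring proof, not a freebie: one must argue that every invariant measure supported on $Q$ projects to Haar measure on the odometer, that only countably many fibres $K_\alpha$ are nondegenerate and hence carry measure zero, so that the projection is a measure isomorphism forcing uniqueness (and non-atomicity) of $\mu$.

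Second, and more seriously, your justification for the absence of exceptional thresholds --- that ``the self-similar, infinitely-refined structure of the solenoid prevents any positive-measure concentration at a single distance level'' --- is false. Take $Q$ a solenoidal Cantor set with all $q_i=2$ and let $\varrho$ restrict on $Q$ to the natural ultrametric, $\varrho(y,z)=2^{-t}$ when $t$ is the last level at which $y,z$ lie in a common cylinder $K_a$; by Hausdorff's metric extension theorem this extends to a metric on $I$ compatible with the Euclidean topology. Then the set of pairs at exactly distance $2^{-t}$ has product measure $2^t\cdot 2\cdot p_{t+1}^{-2}=2^{-(t+1)}>0$, despite perfect self-similarity. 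What actually kills such concentration is precisely the order-preserving hypothesis, which you never invoke at this step: for an order-preserving metric each slice $\{y\colon\varrho(y,z)=\varepsilon\}$ contains at most two points, so Fubini plus non-atomicity of $\mu$ gives $(\mu\times\mu)\{\varrho=\varepsilon\}=0$, and invariance of $\mu$ handles the Bowen metric via $\{\varrho_m=\varepsilon\}\subseteq\bigcup_{i<m}(f^i\times f^i)^{-1}\{\varrho=\varepsilon\}$. With these two repairs your route works, but note how much lighter the paper's proof of Theorem~\ref{Thm:solenoidal} is: it squeezes $\underline{c}_m(x,\varepsilon)$ and $\overline{c}_m(x,\varepsilon)$ between $\#N_m^\circ(x,t,\varepsilon)\,p_t^{-2}$ and $\#N_m(x,t,\varepsilon)\,p_t^{-2}$ (Lemma~\ref{L:solenoidal}, quoted from \cite{Spi19}) and then applies the purely combinatorial Proposition~\ref{Prop:border} --- among $p_t$ ordered disjoint intervals at most $4(p_t-1)$ pairs satisfy $\dist<\varepsilon<\diam$ --- so the two bounds differ by at most $4m(p_t-1)/p_t^2\to0$; no invariant measure is needed at all.
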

\renewcommand{\thedf}{\arabic{df}}
\setcounter{df}{0}

Moreover, we give formulas for computation of these asymptotic correlation sums with respect to the configuration of the intervals forming $\omega_f(x)$ or to the cardinality of $\omega_f(x)$, respectively (see Theorems~\ref{Thm:solenoidal} and~\ref{Thm:finite_c}).
Naturally, one can ask whether result in~Theorem~\ref{Thm:main_c}(\emph{\ref{it:Main_c:finite}}) may be strengthened to hold for \emph{every} $\varepsilon>0$. Proposition~\ref{Prop:c} shows that it cannot be done. Since any $\omega$-limit set of a zero entropy map is either finite or solenoidal (see \cite[Theorems~5.4 and~4.1(d)]{Blo95} and \cite{Smi86}), Theorem~\ref{Thm:main_c} solves all possibilities for dynamical systems $(I,f)$ with zero topological entropy.

\subsection{Recurrence determinism}
We also study recurrence determinism. Recall that the \emph{recurrence $m$-determinism} is
\begin{equation*}
\rdet_m(x,n,\varepsilon)=\frac{C_m(x,n,\varepsilon)}{C_1(x,n,\varepsilon)},
\end{equation*}
and the \emph{lower} and \emph{upper asymptotic recurrence $m$-determinisms} are
\begin{equation*}
\underline{\rdet}_m(x,\varepsilon)=\liminf\limits_{n\to\infty}\rdet_m(x,n,\varepsilon), \hspace{0.5cm}
\hspace{0.5cm}
\overline{\rdet}_m(x,\varepsilon)=\limsup\limits_{n\to\infty}\rdet_m(x,n,\varepsilon),
\end{equation*}
respectively.
If $\underline{\rdet}_m(x,\varepsilon)=\overline{\rdet}_m(x,\varepsilon)$, i.e., if $\lim\nolimits_{n\to\infty}\rdet_m(x,n,\varepsilon)$ exists, we say that the \emph{asymptotic recurrence $m$-determinism} exists and we denote this limit by~$\rdet_m(x,\varepsilon)$. Note that this definition is slightly different from the one commonly used in RQA. However, they are closely related since, by~\cite{GMS13} (see also~\cite{SSMA15}), RQA-determinism is
\begin{equation}\label{eq:Intro_det}
\DET_m(x,n,\varepsilon) = m\cdot\rdet_m(x,n,\varepsilon) - (m-1)\cdot\rdet_{m+1}(x,n,\varepsilon).
\end{equation}
The present paper provides the following result concerning an asymptotic recurrence determinism. Note that a similar result was published in~\cite[Lemma~4.2]{Maj16} and also in~\cite[Proposition~4.1]{Spi19}. 
\setcounter{df}{1}
\renewcommand{\thedf}{\Alph{df}}
\begin{thm}\label{Thm:main_rdet}
	Let $(I,f)$ be a dynamical system such that $x\in I$ and $\omega_f(x)$ is finite. Then, for every $m\in\mathbb{N}$,
	$$\lim\limits_{\varepsilon\to 0}\rdet_m(x,\varepsilon) = 1.$$ 
\end{thm}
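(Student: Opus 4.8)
The plan is to exploit the fact that a finite $\omega$-limit set is a periodic orbit and that the trajectory of $x$ eventually synchronizes with it. First I would record the structural reduction: since $\omega_f(x)$ is finite and $\omega$-limit sets are strongly invariant, $f$ permutes $\omega_f(x)$, and the standard argument (track which small neighborhood of an orbit point the trajectory lies in for large times) shows $\omega_f(x)$ is a single periodic orbit $P=\{p_0,\dots,p_{k-1}\}$ of some period $k$, with $f(p_r)=p_{r+1\bmod k}$. The key preliminary step is the \emph{synchronization} of the trajectory: I claim $f^i(x)\to p_r$ along each residue class $i\equiv r\pmod k$. To prove this I would pass to $g=f^k$; then $\omega_g(x)$ is the set of accumulation points of the subsequence $(f^{ik}(x))_i$, it is contained in $P$, and every point of $P$ is fixed by $g$. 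Since a finite $\omega$-limit set is a periodic orbit and a periodic orbit made up of fixed points is a single point, $\omega_g(x)$ is a singleton, say $p_0$; applying $f^r$ then gives convergence of the class $i\equiv r$ to $p_r$.

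With synchronization in hand, I would fix $\varepsilon_0=\min_{r\ne s}\varrho(p_r,p_s)>0$ and work with $0<\varepsilon<\varepsilon_0/2$. Choose $N$ so large that $\varrho(f^i(x),p_{i\bmod k})<\varepsilon/2$ for all $i\ge N$. The heart of the proof is the comparison of the pairs counted by $C_1$ and by $C_m$ among indices $\ge N$. On the one hand, $\varrho_m\ge\varrho$ always, so every pair counted by $C_m$ is counted by $C_1$. On the other hand, if $i,j\ge N$ and $\varrho(f^i(x),f^j(x))\le\varepsilon$, then the triangle inequality forces $\varrho(p_{i\bmod k},p_{j\bmod k})<2\varepsilon<\varepsilon_0$, hence $i\equiv j\pmod k$; but then for every $0\le l<m$ the indices $i+l,j+l$ again lie in a common residue class and are $\ge N$, so $\varrho(f^{i+l}(x),f^{j+l}(x))<\varepsilon$ and thus $\varrho_m(f^i(x),f^j(x))\le\varepsilon$. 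Therefore, restricted to indices $\ge N$, the two counts coincide.

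Finally I would account for the boundary. Pairs $(i,j)$ with $\min\{i,j\}<N$ number at most $2Nn$, so they contribute at most $2N/n$ to each of $C_1(x,n,\varepsilon)$ and $C_m(x,n,\varepsilon)$; hence $0\le C_1(x,n,\varepsilon)-C_m(x,n,\varepsilon)\le 2N/n\to0$. Moreover the pairs with $i\equiv j\pmod k$ and $i,j\ge N$ already give $C_1(x,n,\varepsilon)\ge 1/k-O(1/n)$, so $\liminf_n C_1(x,n,\varepsilon)>0$. Writing $\rdet_m(x,n,\varepsilon)=1-\bigl(C_1(x,n,\varepsilon)-C_m(x,n,\varepsilon)\bigr)/C_1(x,n,\varepsilon)$ and letting $n\to\infty$ then yields $\rdet_m(x,\varepsilon)=1$ for every $\varepsilon<\varepsilon_0/2$, which in particular gives $\lim_{\varepsilon\to0}\rdet_m(x,\varepsilon)=1$. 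I expect the main obstacle to be the synchronization lemma; once the residue classes are pinned to single orbit points, the counting and the $\varrho$-versus-$\varrho_m$ comparison are routine, with only mild care needed so that $\varepsilon<\varepsilon_0/2$ simultaneously separates distinct orbit points and keeps same-class pairs within the threshold in the Bowen metric.
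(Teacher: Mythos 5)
Your proof is correct, and it reaches the result by a more self-contained route than the paper does. The paper treats the theorem as a quick corollary of its finite-case correlation-sum theorem (Theorem~\ref{Thm:finite_c}): for $0<\varepsilon<\min_{i\neq j}\varrho(y_i,y_j)$ that theorem gives existence and the exact values $c_1(x,\varepsilon)=c_m(x,\varepsilon)=p\cdot p^{-2}$ (only the $p$ diagonal pairs survive the threshold), so the ratio is $1$; the synchronization fact $f^{pn+i}(x)\to y_i$ is simply asserted there as standard. You do two things differently. First, you actually prove synchronization, via $g=f^k$: the set $\omega_g(x)$ is a finite $\omega$-limit set consisting of $g$-fixed points, hence a singleton, and applying $f^r$ pins each residue class to one orbit point --- a clean argument. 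Second, instead of computing the limits $c_1$ and $c_m$ exactly, you bound the difference: past index $N$ the pair sets counted by $C_1$ and $C_m$ coincide (an $\varepsilon$-close pair must share a residue class, and a same-class pair stays $\varepsilon$-close for all $m$ forward steps), boundary pairs contribute $O(N/n)$, and $C_1(x,n,\varepsilon)\geq 1/k - O(1/n)$, so $\rdet_m(x,n,\varepsilon)\to 1$ without ever needing the individual limits $\lim_n C_1$ and $\lim_n C_m$ to exist. What the paper's route buys is brevity and reuse of machinery, plus validity for every $\varepsilon$ below the minimal gap of the orbit rather than half of it; what yours buys is independence from Theorem~\ref{Thm:finite_c} and an explicit proof of the synchronization lemma that the paper leaves implicit. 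Note that neither argument uses the interval structure, so both really prove the statement for arbitrary compact metric spaces.
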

\renewcommand{\thedf}{\arabic{df}}
\setcounter{df}{0}
This theorem says that trajectories of the considered systems are perfectly predictable in the -- arbitrarily large -- finite horizon for points with finite $\omega$-limit sets. Therefore, it is true for \emph{every} point of a \emph{strongly non-chaotic} dynamical system, i.e., a system whose every $\omega$-limit set is finite \cite[page~126]{BC92}.
One may try to~generalize this result. In~\cite[Theorem~4.14]{Maj16}, it was proved that $\rdet_m(x,\varepsilon) = 1$ for all $x\in I$, $m\in\mathbb{N}$ and sufficiently small $\varepsilon>0$ if $f\colon I\to I$ belongs to the class of so-called Delahaye maps~\cite{Del80} (see also~\cite[Example~5.56]{Rue17}). Since strongly non-chaotic or Delahaye maps are not Li-Yorke chaotic (and hence their topological entropy is zero), a~natural question arises whether a similar result holds for \emph{every} continuous map that is not chaotic in Li-Yorke sense. In~\cite[Theorem~4.12]{Maj16}, the~author showed that there are a Delahaye map $f\colon I\to I$ and a point $x\in I$ such that $\limsup_{\varepsilon\to 0}\rdet_\infty(x,\varepsilon) < 1$. In the present paper we construct a continuous map $f\colon I\to I$ that is not Li-Yorke chaotic (and so with zero entropy) and a point $x\in I$ such that $\liminf_{\varepsilon\to0}\rdet_m(x,\varepsilon)<1$ for every $m\in\mathbb{N}\setminus\{1\}$, see Proposition~\ref{Prop:rdet}. Therefore, the result from~Theorem~\ref{Thm:main_rdet} cannot be generalized to solenoidal $\omega$-limit sets.

The paper is organized as follows. In~Section~\ref{S:preliminaries}, we present notation and facts that will be required later. Section~\ref{S:solenoidal} addresses solenoidal sets and contains the~proof of~Theorem~\ref{Thm:main_c}(\emph{\ref{it:Main_c:solenoidal}}). Proof of Theorem~\ref{Thm:main_c}(\emph{\ref{it:Main_c:finite}}) is given in~Section~\ref{S:finite}. This section also provides an example of a dynamical system and a point with finite $\omega$-limit set for~which the asymptotic correlation sum does not exist for a specific choice of~$\varepsilon>0$. The results for recurrence determinism are presented in~Section~\ref{S:determinism}.

\section{Preliminaries}\label{S:preliminaries}
We denote the set of all positive integers by $\mathbb{N}$. The set of all nonnegative integers is denoted by $\mathbb{N}_0$, and the set of all real numbers is denoted by $\mathbb{R}$. For $p\in\mathbb{N}$, we sometimes use the symbol $\mathbb{Z}_p$ to denote the set of all nonnegative integers less than $p$, i.e., $\mathbb{Z}_p=\{0,1,\dots, p-1\}$. Symbols $(a,b)$ and $[a,b]$ are used for the open interval and for the closed interval in $\mathbb{R}$ with endpoints $a,b\in\mathbb{R}$, respectively. We denote by $I$ the closed unit interval $[0,1]$. The \emph{intersection} and the \emph{union} of sets $A,B$ is denoted by $A\cap B$ and $A\cup B$, respectively; if $A,B$ are disjoint, then their union is sometimes denoted by $A\sqcup B$. The cardinality of $A$ is denoted by $\#A$.

Let $(X,f)$ be a dynamical system. We say that a point $x\in X$ is \emph{periodic} if $f^h(x)=x$ for some $h\in\mathbb{N}$. The smallest $h\in\mathbb{N}$ satisfying the above condition is called the~\emph{period} of~$x$. If the period of $x$ is one, then $x$ is said to be a \emph{fixed point}. We denote the set of all periodic points of $(X,f)$ by $\Per(f)$. We say that a point $x\in X$ is \emph{eventually periodic} if $f^k(x)$ is a periodic point for some $k\in\mathbb{N}$. The~set of all eventually periodic points of $(X,f)$ is denoted by $\EPer(f)$. Trivially, $\Per(f)\subseteq \EPer(f)$. The \emph{orbit} of $x\in X$ is the set $\Orb_f(x) = \left\{f^n(x)\colon n\in\mathbb{N}_0\right\}$. The symbol $\omega_f(x)$ is used to denote the \emph{$\omega$-limit set} of $x\in X$, i.e., the set of all limit points of the trajectory $\left(f^n(x)\right)_{n\in\mathbb{N}_0}$. A compact interval $J\subseteq X$ is \emph{$p$-periodic} ($p\in\mathbb{N}$) if $f^p(J)=J$ and $f^k(J)$, $0\leq k<p$, are pairwise disjoint.

\subsection{Correlation sum and recurrence determinism}
It is easy to see that (asymptotic) correlation sums are:
\begin{itemize}
	\item numbers from the unit interval $[0,1]$; in~particular, (asymptotic) correlation sums are equal to $1$ for~every $\varepsilon\geq\diam(X)$, where $\diam(X)$ denotes the~diameter of $X$;
	\item nondecreasing functions of $\varepsilon$;
	\item nonincreasing functions of $m$;
\end{itemize}
and (asymptotic) recurrence determinisms are:
\begin{itemize}
	\item numbers from the unit interval $[0,1]$; in~particular, (asymptotic) recurrence determinisms are equal to $1$ for~every $\varepsilon\geq\diam(X)$, or for $m=1$;
	\item nonincreasing functions of $m$.
\end{itemize}

The following is trivial (see, e.g., Lemma~3.4 from~\cite{Spi19}). It says that the values of asymptotic correlation sums and asymptotic recurrence determinisms do not depend on the first finitely many iterates.

\begin{lemma}\label{L:iteration}
	Let $(X,f)$ be a dynamical system, $x\in X$, $h\in\mathbb{N}$, $m\in\mathbb{N}$, and $\varepsilon>0$. Then,
	\begin{equation*}
	\underline{c}_m\left(f^h(x),\varepsilon\right) = \underline{c}_m(x,\varepsilon),
	\hspace{0.5cm}\hspace{0.5cm}
	\overline{c}_m\left(f^h(x),\varepsilon\right) = \overline{c}_m(x,\varepsilon),
	\end{equation*}
	and
	\begin{equation*}
	\underline{\rdet}_m\left(f^h(x),\varepsilon\right) = \underline{\rdet}_m(x,\varepsilon),
	\hspace{0.5cm}\hspace{0.5cm}
	\overline{\rdet}_m\left(f^h(x),\varepsilon\right) = \overline{\rdet}_m(x,\varepsilon).
	\end{equation*}
\end{lemma}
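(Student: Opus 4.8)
The plan is to show that shifting the trajectory by finitely many iterates changes each counting set $\{(i,j) : 0\le i,j<n,\ \varrho_m(f^i(x),f^j(x))\le\varepsilon\}$ only by a bounded number of pairs, so that after normalizing by $n^2$ the difference vanishes in the limit. Concretely, fix $h,m\in\mathbb{N}$ and $\varepsilon>0$, write $y=f^h(x)$, and compare $C_m(x,n+h,\varepsilon)$ with $C_m(y,n,\varepsilon)$. Since $f^i(y)=f^{i+h}(x)$, the pairs counted in $C_m(y,n,\varepsilon)$ correspond exactly to pairs $(i',j')$ with $h\le i',j'<n+h$ and $\varrho_m(f^{i'}(x),f^{j'}(x))\le\varepsilon$; this is the same set counted in $n^2 C_m(x,n+h,\varepsilon)$ except that the latter also includes pairs where $i'$ or $j'$ lies in $\{0,1,\dots,h-1\}$.

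The key estimate I would carry out is that the symmetric difference between these two index sets has cardinality at most $2hn + h^2 \le 2h(n+h)$, since a pair can differ only if one of its two coordinates falls in the initial block of length $h$. Therefore
\begin{equation*}
\left|\,n^2 C_m(y,n,\varepsilon) - (n+h)^2 C_m(x,n+h,\varepsilon)\,\right| \le 2h(n+h).
\end{equation*}
Dividing by $n^2$ and using that $(n+h)^2/n^2\to 1$ while $2h(n+h)/n^2\to 0$ as $n\to\infty$, I conclude that $C_m(y,n,\varepsilon)$ and $C_m(x,n+h,\varepsilon)$ have the same $\liminf$ and the same $\limsup$. Since the sequence $\bigl(C_m(x,n+h,\varepsilon)\bigr)_{n}$ is just a tail reindexing of $\bigl(C_m(x,n,\varepsilon)\bigr)_{n}$, it has the same $\liminf$ and $\limsup$ as the original sequence. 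Chaining these equalities yields $\underline{c}_m(f^h(x),\varepsilon)=\underline{c}_m(x,\varepsilon)$ and $\overline{c}_m(f^h(x),\varepsilon)=\overline{c}_m(x,\varepsilon)$.

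For the recurrence determinism statements, the cleanest route is to observe that $\rdet_m(x,n,\varepsilon)$ is a ratio of two correlation sums, and that a ratio of two convergent-in-$\liminf/\limsup$ quantities whose numerators and denominators each stabilize need not factor through the limit operations in general; so rather than manipulate the $\liminf$ of the quotient directly, I would apply the already-established correlation-sum equalities componentwise. Since $\rdet_m(f^h(x),n,\varepsilon)=C_m(f^h(x),n,\varepsilon)/C_1(f^h(x),n,\varepsilon)$, and both numerator and denominator sequences for $f^h(x)$ agree asymptotically (in the precise sense established above) with the corresponding sequences for $x$ up to the negligible $O(1/n)$ correction, the liminf and limsup of the ratio are preserved. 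The main subtlety to handle carefully is this last step: one cannot simply pass $\liminf$ and $\limsup$ through a quotient, so I would instead show directly that the two determinism \emph{sequences} $\bigl(\rdet_m(f^h(x),n,\varepsilon)\bigr)_n$ and $\bigl(\rdet_m(x,n+h,\varepsilon)\bigr)_n$ differ by a term tending to $0$, using that $C_1(x,n,\varepsilon)\ge n/n^2 = 1/n \cdot$ (a positive lower bound coming from the diagonal pairs $i=j$, which always contribute $n$ to the count so that $n^2 C_1(x,n,\varepsilon)\ge n$), and then conclude as before. This positivity of the denominator is exactly the point where I expect the real care to be needed.
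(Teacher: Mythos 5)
Your treatment of the correlation sums is correct and complete: the counting estimate $\bigl|(n+h)^2 C_m(x,n+h,\varepsilon)-n^2 C_m(f^h(x),n,\varepsilon)\bigr|\le 2h(n+h)$, the division by $n^2$, and the tail-reindexing remark together give both equalities for $\underline{c}_m$ and $\overline{c}_m$. (The paper itself offers no proof, citing the lemma as trivial from \cite[Lemma~3.4]{Spi19}; your argument for this half is the standard one.)

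The determinism half, however, has a genuine gap, located exactly at the point you flagged. The lower bound you propose for the denominator, $C_1(x,n,\varepsilon)\ge n/n^2=1/n$ coming from the diagonal pairs, decays to zero and is too weak to make the ``difference of sequences tends to $0$'' argument work. Quantitatively, write $A\le A'\le A+E$ and $B\le B'\le B+E$ for the unnormalized $m$-counts and $1$-counts of $f^h(x)$ (over $n$ indices) and of $x$ (over $n+h$ indices), where $E=2h(n+h)$. Using $A\le B$, the best one can extract from these inequalities alone is
\begin{equation*}
\left|\frac{A}{B}-\frac{A'}{B'}\right|
=\frac{\bigl|A(B'-B)-B(A'-A)\bigr|}{BB'}
\le\frac{2E}{B'},
\end{equation*}
and with only $B'\ge n+h$ this is bounded by $4h$, a constant, not a vanishing quantity. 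This is not a removable defect of the estimate: in a \emph{noncompact} metric space one can realize a trajectory in which all points of $\Orb_f(f(x))$ are pairwise $\tfrac32\varepsilon$ apart while each lies at distance exactly $\varepsilon$ from $x$; then $\underline{\rdet}_m(f(x),\varepsilon)=1$ but $\underline{\rdet}_m(x,\varepsilon)=\tfrac13$ for every $m\ge2$. So any correct proof must invoke compactness of $X$, which your argument never does. The repair is short: since $X$ is compact, cover it by $N=N(\varepsilon)$ balls of radius $\varepsilon/2$; among the points $f^i(x)$, $0\le i<n$, the number of ordered pairs lying in a common ball is at least $n^2/N$ by Cauchy--Schwarz, and all such pairs are $\varepsilon$-close, whence $C_1(x,n,\varepsilon)\ge 1/N$ \emph{uniformly in $n$}. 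With $B,B'\ge (n+h)^2/N$ the displayed bound becomes at most $4hN/(n+h)\to0$, and your componentwise comparison of the two determinism sequences then closes correctly.
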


\subsection{Configurations of compact real intervals}

Let $J$, $K$ and $L$ be compact real intervals. We denote the distance between $J$ and $K$ by $\dist(J,K)$, i.e.,
$$\dist(J,K) = \min\left\{\varrho(x,y)\colon x\in J,\ y\in K\right\}.$$
The maximum distance among points in $J$ and $K$ is $\diam(J\cup K)$:
$$\diam(J\cup K) = \max\left\{\varrho(x,y)\colon x,y\in J\cup K\right\}.$$
If $\max J< \min K$, then we write $J<K$.
We write $J\leq K$ if $J<K$ or $J=K$.

Let $n\in\mathbb{N}$, $\varepsilon>0$ and $J_1<J_2<\dots<J_n$ be compact real intervals. Put
$$\mathcal{I}_n(\varepsilon) = \left\{(a,b)\colon 1\leq a,b\leq n,\quad \dist\left(J_a, J_b\right) <\varepsilon <\diam\left(J_a\cup J_b\right)\right\}.$$
Trivially, $(a,a)\in \mathcal{I}_n(\varepsilon)$ for every $a$ with $\diam\left(J_a\right)>\varepsilon$, and
\begin{equation}\label{eq:ab_ba}
(a,b)\in\mathcal{I}_n(\varepsilon) \quad \iff \quad (b,a)\in\mathcal{I}_n(\varepsilon).
\end{equation}
Till the end of this section we assume that the metric $\varrho$ is order-preserving and compatible with the Euclidean topology on the real line. Immediately, we have the~following lemma.

\begin{lemma}\label{L:amongI}
	Let $n\in\mathbb{N}, \varepsilon>0$ and $J_1<J_2<\dots<J_n$ be compact real intervals.
	If $(a,b)\in\mathcal{I}_n(\varepsilon)$ and $(a,d)\in\mathcal{I}_n(\varepsilon)$ for some $1\leq a\leq b<d\leq n$, then $(a,c)\in\mathcal{I}_n(\varepsilon)$ for every integer $c$ such that $b<c<d$.
	
	Similarly, if $(a,d)\in\mathcal{I}_n(\varepsilon)$ and $(c,d)\in\mathcal{I}_n(\varepsilon)$ for some $1\leq a< c\leq d\leq n$, then $(b,d)\in\mathcal{I}_n(\varepsilon)$ for every integer $b$ such that $a<b<c$.
\end{lemma}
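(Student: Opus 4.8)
The plan is to reduce both assertions to a single monotonicity fact: once the left-hand interval $J_a$ is held fixed, both $\dist(J_a,J_c)$ and $\diam(J_a\cup J_c)$ are \emph{strictly increasing} in the index $c$ as $c$ ranges over $\{a,a+1,\dots,n\}$. The order-preserving hypothesis on $\varrho$ is precisely what delivers this. I would prove the first assertion in full; the second is its mirror image, obtained by the same argument with the two endpoints interchanged (equivalently, by applying the first assertion to the reflected configuration $\tilde{J}_k=1-J_{n+1-k}$ with the order-preserving metric $\tilde\varrho(x,y)=\varrho(1-x,1-y)$).

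First I would record how $\dist$ and $\diam$ are realized on endpoints. Writing $l_k=\min J_k$ and $r_k=\max J_k$, the ordering $J_1<J_2<\dots<J_n$ means $r_k<l_{k+1}$ for each $k$, so for any pair of indices with $a<c$ the nearest points of $J_a$ and $J_c$ are $r_a$ and $l_c$, giving $\dist(J_a,J_c)=\varrho(r_a,l_c)$, while the extreme points of $J_a\cup J_c$ are $l_a$ and $r_c$, giving $\diam(J_a\cup J_c)=\varrho(l_a,r_c)$.

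The core step is the strict monotonicity. Fixing $a$ and taking $a<c<d$, I have the chain $r_a<l_c<l_d$ (the first inequality because $a<c$, the second because $J_c<J_d$ forces $r_c<l_d$ and $l_c\leq r_c$), so the order-preserving property $\max\{\varrho(x,y),\varrho(y,z)\}<\varrho(x,z)$ for $x<y<z$ yields $\varrho(r_a,l_c)<\varrho(r_a,l_d)$, that is, $\dist(J_a,J_c)<\dist(J_a,J_d)$. Likewise the chain $l_a<r_b<r_c$ (valid whenever $a<b<c$, since $l_a\leq r_a<l_b\leq r_b$ and $r_b<l_c\leq r_c$) gives $\diam(J_a\cup J_b)<\diam(J_a\cup J_c)$. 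Feeding in the data of the lemma: from $(a,d)\in\mathcal{I}_n(\varepsilon)$ together with $c<d$ I get $\dist(J_a,J_c)<\dist(J_a,J_d)<\varepsilon$, and from $(a,b)\in\mathcal{I}_n(\varepsilon)$ together with $b<c$ I get $\diam(J_a\cup J_c)>\diam(J_a\cup J_b)>\varepsilon$; both inequalities together are exactly the membership $(a,c)\in\mathcal{I}_n(\varepsilon)$.

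The only point requiring separate care --- and the sole genuine obstacle --- is the degenerate diagonal case $a=b$, where $\diam(J_a\cup J_b)=\diam(J_a)=\varrho(l_a,r_a)$ rather than $\varrho(l_a,r_b)$, so the chain above does not directly apply. There, however, $(a,a)\in\mathcal{I}_n(\varepsilon)$ forces $\diam(J_a)>\varepsilon>0$, hence $J_a$ is nondegenerate and $l_a<r_a$; combining this with $r_a<r_c$ (from $a<c$) gives the chain $l_a<r_a<r_c$, and the order-preserving property again yields $\diam(J_a\cup J_c)>\diam(J_a)>\varepsilon$. Once this case is absorbed, the monotonicity argument closes both the distance and the diameter inequality and proves the first assertion; the second follows verbatim after swapping the roles of $l$ and $r$ (there the fixed interval is $J_d$, the distance bound comes from $(a,d)$ and the diameter bound from $(c,d)$, with the analogous degenerate case $c=d$).
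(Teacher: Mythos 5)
Your proof is correct. The paper offers no proof at all for this lemma (it is introduced with ``Immediately, we have the following lemma''), and your argument --- reducing both assertions to strict monotonicity of $\dist(J_a,J_c)$ and $\diam(J_a\cup J_c)$ in the indices via the order-preserving property of $\varrho$, with separate treatment of the diagonal cases $a=b$ and $c=d$ --- is exactly the verification the paper treats as immediate, carried out with the right care at the one genuinely delicate point (the degenerate diagonal pairs).
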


The next lemma describes another relationship among pairs in $\mathcal{I}_n(\varepsilon)$.
\begin{lemma}\label{L:bcNot}
	Let $n\in\mathbb{N}, \varepsilon>0$ and $J_1<J_2<\dots<J_n$ be compact real intervals. Assume that integers $a,b,c,d$ are such that $1\leq a<b\leq c<d\leq n$ and $(a,d)\in\mathcal{I}_n(\varepsilon)$. Then $(b,c)\not\in\mathcal{I}_n(\varepsilon)$.
\end{lemma}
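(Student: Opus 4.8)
The plan is to reduce everything to the endpoints of the intervals and to exploit the order-preserving property as a monotonicity principle. Write $J_i = [l_i, r_i]$ with $l_i = \min J_i$ and $r_i = \max J_i$; since $J_1 < \dots < J_n$ we have $r_i < l_{i+1}$ for all $i$, and the hypotheses $a < b \le c < d$ yield the chain $r_a < l_b \le r_c < l_d$ among the relevant endpoints. First I would record, as a general fact about an order-preserving metric $\varrho$, the monotonicity statement: if $u \le v < w \le t$ on the real line, then $\varrho(v,w) \le \varrho(u,t)$. This follows by applying the defining inequality $\max\{\varrho(x,y),\varrho(y,z)\} < \varrho(x,z)$ twice, first to $u \le v < w$ to get $\varrho(v,w) \le \varrho(u,w)$, then to $u < w \le t$ to get $\varrho(u,w) \le \varrho(u,t)$, with equalities arising only when endpoints coincide.

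Next I would use this monotonicity to pin down the two quantities appearing in the definition of $\mathcal{I}_n(\varepsilon)$. For $i < j$ the intervals satisfy $J_i < J_j$, and the principle above shows that the nearest pair of points is $(r_i, l_j)$ while the farthest pair is $(l_i, r_j)$; hence $\dist(J_i, J_j) = \varrho(r_i, l_j)$ and $\diam(J_i \cup J_j) = \varrho(l_i, r_j)$. In particular, since $a < d$, the assumption $(a,d) \in \mathcal{I}_n(\varepsilon)$ unwinds to $\varrho(r_a, l_d) < \varepsilon < \varrho(l_a, r_d)$, of which I shall only need the left inequality $\varrho(r_a, l_d) < \varepsilon$.

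The conclusion then comes from a single application of monotonicity to the chain $r_a \le l_b < r_c \le l_d$: it gives $\varrho(l_b, r_c) \le \varrho(r_a, l_d) < \varepsilon$, so $\diam(J_b \cup J_c) = \varrho(l_b, r_c) < \varepsilon$. Since membership in $\mathcal{I}_n(\varepsilon)$ requires $\varepsilon < \diam(J_b \cup J_c)$, this inequality already rules it out, yielding $(b,c) \notin \mathcal{I}_n(\varepsilon)$. I would handle the boundary case $b = c$ (where $J_b = J_c$, so $\diam(J_b \cup J_c) = \diam(J_b) = \varrho(l_b, r_b)$) and the degenerate possibility $l_b = r_c$ (forcing $\diam(J_b \cup J_c) = 0$) explicitly, but in each the same bound $\diam(J_b \cup J_c) \le \dist(J_a, J_d) < \varepsilon$ survives. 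I do not expect a genuine obstacle: the only care needed is the bookkeeping of the order-preserving inequalities and the distinction between strict and non-strict comparisons when adjacent endpoints coincide, which is precisely why the monotonicity principle must be stated with $\le$ so that single-point intervals are covered.
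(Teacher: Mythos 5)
Your proposal is correct and follows essentially the same route as the paper: both arguments reduce to the single inequality $\diam(J_b\cup J_c)\le\dist(J_a,J_d)<\varepsilon$ forced by the configuration $J_a<J_b\le J_c<J_d$ and the order-preserving property, which immediately contradicts the requirement $\varepsilon<\diam(J_b\cup J_c)$ for membership in $\mathcal{I}_n(\varepsilon)$. The only difference is one of detail: the paper asserts this inequality as immediate (treating $b=c$ as trivial), while you spell out the endpoint reduction and the two-step monotonicity argument behind it, including the degenerate cases.
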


\begin{proof}
	For $b=c$ the assertion is trivial, so we may assume that $b<c$. The~configuration $J_a<J_b<J_c<J_d$ implies that $\diam\left(J_b\cup J_c\right) <\dist\left(J_a, J_d\right)$. Hence, using the fact that $(a,d)\in\mathcal{I}_n(\varepsilon)$, $\diam\left(J_b\cup J_c\right) <\varepsilon$, and so $(b,c)\not\in\mathcal{I}_n(\varepsilon)$.
\end{proof}

\begin{rem}
	Obviously, $\#\mathcal{I}_n(\varepsilon)\geq0$ for any $\varepsilon>0$ and $n\in\mathbb{N}$. For any $n\in\mathbb{N}$ and $\varepsilon>0$, it is easy to construct compact real intervals $J_1<J_2<\dots<J_n$ such that $\#\mathcal{I}_n(\varepsilon)=0$. To do this, let $J_1$ be a compact real interval with diameter less than or equal to $\varepsilon$. Then inductively construct the remaining intervals such that, for~every integer $1< k\leq n$, $J_k$ is a compact real interval, $J_{k-1}<J_{k}$, $\diam(J_k)\leq \varepsilon$ and $\dist(J_{k-1}, J_{k})\geq\varepsilon$.
	
	Clearly, if $n=1$ and $J(=J_1)$ is a compact real interval, then
	\begin{equation*}
	\#\mathcal{I}_1(\varepsilon)=
	\begin{cases}
	0 & \text{if }\diam(J)\leq\varepsilon,\\
	1 & \text{if }\varepsilon<\diam(J).
	\end{cases}
	\end{equation*}
\end{rem}

For $n>1$, we derive the following result giving an upper boundary for the~cardinality of $\mathcal{I}_n(\varepsilon)$ where $\varepsilon>0$.

\begin{prop}\label{Prop:border}
	Let $n\in\mathbb{N}\setminus\{1\}, \varepsilon>0$ and $J_1<J_2<\dots<J_n$ be compact real intervals. Then $$\# \mathcal{I}_n(\varepsilon) \leq 4\cdot(n-1).$$
	Moreover, the estimate is optimal.
\end{prop}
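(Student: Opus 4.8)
The plan is to reduce everything to the part of $\mathcal{I}_n(\varepsilon)$ lying on or above the diagonal and then to bound two quantities separately. Write
$\mathcal{T}=\{(a,b)\in\mathcal{I}_n(\varepsilon):a\le b\}$, $U=\{(a,b)\in\mathcal{I}_n(\varepsilon):a<b\}$ and $D=\{a:(a,a)\in\mathcal{I}_n(\varepsilon)\}$. By the symmetry~\eqref{eq:ab_ba} the number of pairs with $a>b$ equals $\#U$, so $\#\mathcal{I}_n(\varepsilon)=2\#U+\#D$; since $\#\mathcal{T}=\#U+\#D$, this rewrites as $\#\mathcal{I}_n(\varepsilon)=\#\mathcal{T}+\#U$. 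The whole estimate then follows from the two bounds $\#\mathcal{T}\le 2n-1$ and $\#U\le 2n-3$, whose sum is exactly $4(n-1)$.

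The key structural input is a \emph{staircase} (monotonicity) property. For a fixed first coordinate $a$, Lemma~\ref{L:amongI} shows that $\{b\ge a:(a,b)\in\mathcal{I}_n(\varepsilon)\}$ is an interval of integers. Moreover, if $(a,b),(c,d)\in\mathcal{T}$ with $a<c$, then $b\le d$: otherwise $a<c\le d<b$, and since $(a,b)\in\mathcal{I}_n(\varepsilon)$, Lemma~\ref{L:bcNot} would give $(c,d)\notin\mathcal{I}_n(\varepsilon)$, a contradiction. Listing the first coordinates occurring in $\mathcal{T}$ as $a_1<\dots<a_k$ and writing $[m_i,M_i]$ for the block of second coordinates in row $a_i$, this monotonicity yields $M_i\le m_{i+1}$ for all $i$, together with $a_i\le m_i\le M_i\le n$ and $k\le n$.

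For the counting I would telescope. Since $M_i\le m_{i+1}$,
\[ \sum_{i=1}^{k}(M_i-m_i)=\sum_{i=1}^{k}M_i-\sum_{i=1}^{k}m_i\le M_k-m_1\le n-1, \]
hence $\#\mathcal{T}=\sum_{i=1}^k(M_i-m_i+1)\le(n-1)+k\le 2n-1$. The identical argument applied to $U$ gives the sharper bound: every first coordinate occurring in $U$ needs a strictly larger partner $\le n$, so it is at most $n-1$, whence there are at most $n-1$ rows; and the smallest block value is at least $a_1+1\ge 2$, so the telescoped span is at most $n-2$. Together $\#U\le(n-2)+(n-1)=2n-3$, and adding the two estimates gives $\#\mathcal{I}_n(\varepsilon)\le 4(n-1)$. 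I expect the main obstacle to be precisely this bookkeeping: bounding $\mathcal{T}$ alone and doubling would give $2(2n-1)=4n-2$, too large by $2$; the missing $2$ is recovered only by treating $U$ separately, where both the number of rows and the available range of second coordinates shrink by one. This reflects the competition between diagonal pairs (those with $\diam J_a>\varepsilon$) and genuine off-diagonal recurrences for the same linear budget.

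For optimality I would exhibit a configuration attaining $4(n-1)$. Fix $\varepsilon>0$ and choose $J_1$ and $J_n$ with $\diam J_1,\diam J_n>\varepsilon$, while placing the tiny intervals $J_2<\dots<J_{n-1}$ inside a window strictly between $J_1$ and $J_n$ so small that $\dist(J_1,J_n)<\varepsilon$; this is possible because $\varrho$ is order-preserving and compatible with the Euclidean topology. Then $\diam(J_1\cup J_b)\ge\diam J_1>\varepsilon$ and $\dist(J_1,J_b)\le\dist(J_1,J_n)<\varepsilon$ for every $b>1$, so all pairs $(1,b)$ lie in $\mathcal{I}_n(\varepsilon)$; symmetrically all pairs $(a,n)$ do. Conversely, for $2\le a<b\le n-1$ both intervals sit inside the small window, so $\diam(J_a\cup J_b)<\varepsilon$ and $(a,b)\notin\mathcal{I}_n(\varepsilon)$, while $(a,a)\in\mathcal{I}_n(\varepsilon)$ holds only for $a\in\{1,n\}$. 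Hence $\mathcal{T}=\{(1,b):1\le b\le n\}\cup\{(a,n):2\le a\le n\}$ has $2n-1$ elements and $\#D=2$, so $\#\mathcal{I}_n(\varepsilon)=\#\mathcal{T}+\#U=(2n-1)+(2n-3)=4(n-1)$, which shows the estimate is optimal.
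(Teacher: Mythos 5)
Your proof is correct, and it rests on exactly the same structural inputs as the paper's: Lemma~\ref{L:amongI} makes each row $\{b\ge a:(a,b)\in\mathcal{I}_n(\varepsilon)\}$ an integer interval, and Lemma~\ref{L:bcNot} forces the staircase monotonicity $M_i\le m_{i+1}$ across rows. Where you genuinely diverge is the bookkeeping. The paper counts via $\#\mathcal{I}_n(\varepsilon)=\#D+2\#U$ (its identity~\eqref{eq:aa_2ab}), lists the upper-triangular pairs lexicographically, and then splits into four cases according to whether the first block starts on the diagonal and whether the last block is a diagonal singleton, bounding the count separately in each case. You instead use $\#\mathcal{I}_n(\varepsilon)=\#\mathcal{T}+\#U$ and prove two uniform telescoping bounds, $\#\mathcal{T}\le (n-1)+k\le 2n-1$ and $\#U\le (n-2)+(n-1)=2n-3$; the diagonal/off-diagonal tension that the paper's cases track by hand is absorbed into the two observations that $U$ has at most $n-1$ rows and that its second coordinates start at $\ge 2$. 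This removes the case analysis entirely, and your closing remark correctly identifies why the splitting is necessary: doubling the bound on $\#\mathcal{T}$ alone gives only $4n-2$, so the two missing units must come from the sharper bound on $U$. Your optimality construction is essentially the paper's --- two intervals $J_1,J_n$ of $\varrho$-diameter $>\varepsilon$ separated by a gap of $\varrho$-width $<\varepsilon$, with arbitrary small intervals in between (note that ``inside the window'' is automatic from $J_1<J_b<J_n$) --- except that you rule out the middle pairs $(a,b)$, $2\le a\le b\le n-1$, directly from order-preservation of the metric, where the paper invokes Lemma~\ref{L:bcNot}; both verifications are valid, and both share the same implicit (and harmless, for the Euclidean metric) assumption that intervals of $\varrho$-diameter exceeding $\varepsilon$ exist.
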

\begin{proof}
	From~the~definition of~$\mathcal{I}_n(\varepsilon)$ and~\eqref{eq:ab_ba} we get
	\begin{equation}\label{eq:aa_2ab}
	\begin{split}
	\# \mathcal{I}_n(\varepsilon)&= \#\bigl\{(a,a):\ 1\leq a\leq n,\quad \varepsilon<\diam(J_a)\bigr\}\\
	&+ 2\#\bigl\{(a,b)\colon\ 1\leq a<b\leq n,\quad \dist(J_a,J_b)<\varepsilon<\diam(J_a\cup J_b)\bigr\}.
	\end{split}
	\end{equation}
	Order all the pairs $(a,b)$ from $\mathcal{I}_n(\varepsilon)$ with $a\leq b$ lexicographically. Then, by~Lemmas~\ref{L:amongI}~and~\ref{L:bcNot}, all the pairs $(a,b)\in \mathcal{I}_n(\varepsilon)$ with $a\leq b$ can be written as
	\begin{equation}\label{eq:lexicografic}
	\begin{split}
	&(a_1,b_1)<(a_1,b_1+1)<\dots<(a_1,c_1)\\
	<\ & (a_2,b_2)<(a_2,b_2+1)<\dots<(a_2,c_2)\\
	<\ & \dots\\
	<\ & (a_k,b_k)<(a_k,b_k+1)<\dots<(a_k,c_k)
	\end{split}
	\end{equation}
	where
	\begin{enumerate}[label=(\roman*), ref=(\roman*)]
		\item\label{it:4enum_abc_1} $k = \#\left\{a\colon \left(a,b\right)\in\mathcal{I}_n(\varepsilon) \text{ for some } b\geq a\right\}$;
		\item\label{it:4enum_abc_2} $1\leq a_1<a_2<\dots<a_k\leq b_k\leq c_k\leq n$;
		\item\label{it:4enum_abc_3} $a_i\leq b_i\leq c_i\leq b_{i+1}$ for every $i\in\{1,2,\dots,k-1\}$.
	\end{enumerate}
	Now we distinguish four cases; in each of them we show that the cardinality estimate from the assertion is true.
	
	\smallskip
	\emph{Case~1:}
	If $a_1 = b_1$ and $a_k = b_k = c_k$, then the number of these pairs is
	\begin{equation*}
	1+\left(c_1-a_1\right)+(c_2-b_2+1)+\dots+(c_{k-1}-b_{k-1}+1)+1.
	\end{equation*}
	So, by~\eqref{eq:aa_2ab} and~\ref{it:4enum_abc_1}-\ref{it:4enum_abc_3},
	\begin{equation*}
	\begin{split}
	\#\mathcal{I}_n(\varepsilon)&\leq 2+2\cdot\bigl[(c_1-a_1)+(c_2-b_2+1)+\dots+(c_{k-1}-b_{k-1}+1)\bigr]\\
	&\leq 2+2\cdot\bigr[c_{k-1}-a_1+(k-2)\bigl] \leq 2+2\cdot\bigl[n-1+(n-2)\bigr] = 4\cdot(n-1).
	\end{split}
	\end{equation*}

	\smallskip
	\emph{Case~2:}
	If $a_1=b_1$ and $a_k<c_k$ (and therefore $k\leq n-1$ by~\ref{it:4enum_abc_2}), then the~number of these pairs is
	\begin{equation*}
	1+\left(c_1-a_1\right)+(c_2-b_2+1)+\dots+(c_{k}-b_{k}+1).
	\end{equation*}
	Using~\eqref{eq:aa_2ab} and~\ref{it:4enum_abc_1}-\ref{it:4enum_abc_3},
	\begin{equation*}
	\begin{split}
	\#\mathcal{I}_n(\varepsilon)&\leq 1+2\cdot\bigl[(c_1-a_1)+(c_2-b_2+1)+\dots+(c_{k}-b_{k}+1)\bigr]\\
	&\leq 1+2\cdot\bigr[c_{k}-a_1+(k-1)\bigl] \leq 1+2\cdot\bigl[n-1+(n-2)\bigr] < 4\cdot(n-1).
	\end{split}
	\end{equation*}

	\smallskip
	\emph{Case~3:}
	If $a_1<b_1$ and $a_k=b_k=c_k$ (and therefore $b_1\geq 2$ by~\ref{it:4enum_abc_2} and~\ref{it:4enum_abc_3}), then the number of these pairs is
	\begin{equation*}
	(c_1-b_1+1)+(c_2-b_2+1)+\dots+(c_{k-1}-b_{k-1}+1)+1.
	\end{equation*}
	Therefore, \eqref{eq:aa_2ab} and~\ref{it:4enum_abc_1}-\ref{it:4enum_abc_3} imply
	\begin{equation*}
	\begin{split}
	\#\mathcal{I}_n(\varepsilon)&\leq 1+2\cdot\bigl[(c_1-b_1+1)+(c_2-b_2+1)+\dots+(c_{k-1}-b_{k-1}+1)\bigr]\\
	&\leq 1+2\cdot\bigr[c_{k-1}-b_1+(k-1)\bigl] \leq 1+2\cdot\bigl[n-2+(n-1)\bigr] < 4\cdot(n-1).
	\end{split}
	\end{equation*}

	\smallskip
	\emph{Case~4:}
	If $a_1<b_1$ and $a_k<c_k$ (and therefore $k\leq n-1$ by~\ref{it:4enum_abc_2}, and~$b_1\geq 2$ by~\ref{it:4enum_abc_2} and~\ref{it:4enum_abc_3}), then the number of these pairs is
	\begin{equation*}
	(c_1-b_1+1)+(c_2-b_2+1)+\dots+(c_{k}-b_{k}+1).
	\end{equation*}
	Hence, \eqref{eq:aa_2ab} and~\ref{it:4enum_abc_1}-\ref{it:4enum_abc_3} give
	\begin{equation*}
	\begin{split}
	\#\mathcal{I}_n(\varepsilon)&\leq 2\cdot\bigl[(c_1-b_1+1)+(c_2-b_2+1)+\dots+(c_{k}-b_{k}+1)\bigr]\\
	&\leq 2\cdot\bigr(c_{k}-b_1+k\bigl) \leq 2\cdot\bigl[n-2+(n-1)\bigr] < 4\cdot(n-1).
	\end{split}
	\end{equation*}

	\smallskip
	We proved that in every case one has $\#\mathcal{I}_n(\varepsilon)\leq 4\cdot(n-1)$. To finish the proof we need to show that for all $n\in\mathbb{N}\setminus\{1\}$ and $\varepsilon>0$ there are compact real intervals $J_1<J_2<\dots<J_n$ such that $\#\mathcal{I}_n(\varepsilon) = 4\cdot(n-1)$. The construction is as follows. Let $J_1<J_n$ be compact real intervals with $\dist\left(J_1, J_n\right)<\varepsilon$ and $\diam(J_1)>\varepsilon$, $\diam(J_n)>\varepsilon$. Take any compact real intervals $J_2, \dots, J_{n-1}$ such that $J_1<J_2<\dots<J_{n-1}<J_n$. Immediately, $(1,1), (1,n), (n,1), (n,n) \in\mathcal{I}_n(\varepsilon)$. Therefore, by~Lemma~\ref{L:amongI} and~\eqref{eq:ab_ba}, also $(1,k), (k,1), (k,n), (n,k) \in\mathcal{I}_n(\varepsilon)$ for every $k\in\{2,3,\dots,n-1\}$. Moreover, by~Lemma~\ref{L:bcNot}, there are no other pairs in $\mathcal{I}_n(\varepsilon)$. Hence, $\#\mathcal{I}_n(\varepsilon) = 4+4\cdot(n-2) = 4\cdot(n-1)$.
\end{proof}

\section{Correlation sums - solenoidal case}\label{S:solenoidal}
Let $(I, f)$ be a dynamical system. For $x\in I$, we say that $\omega_f(x)$ is \emph{solenoidal} (see \cite[p.~4]{Blo95}) if there are a sequence of integers $2\leq p_0<p_1<\dots$ and a sequence of~nondegenerate closed intervals $J_0\supseteq J_1\supseteq\dots\in I$ such that every $J_t$ is $p_t$-periodic and $\omega_f(x)\subseteq Q$ where
$$Q=\bigcap_{t=0}^\infty Q_t,\qquad Q_t = \bigsqcup_{i=0}^{p_t-1} f^{i}\left(J_t\right).$$

Put $q_0=p_0$ and $q_t=p_t/p_{t-1}$ for $t\geq1$. Denote the Cartesian products of $\left\{0,1,\dots, q_{i}-1\right\}$ by $\Sigma$ and $\mathcal{A}^t$:
$$\Sigma = \prod_{i=0}^\infty \left\{0,1,\dots, q_{i}-1\right\}, \qquad\mathcal{A}^t = \prod_{i=0}^{t-1}\left\{0,1,\dots, q_{i}-1\right\}\quad (t\geq1).$$
Every element $a=a_0a_1\dots a_{t-1}$
of $\mathcal{A}^t$ is called a \emph{word} and its \emph{length} is $\vert a\vert=t$.
Define $\mathcal{A}^0=\{o\}$ (a singleton set containing the empty word $o$)
and $\mathcal{A}^*=\bigsqcup_{t\ge 0} \mathcal{A}^t$.
Let $\pi_t:\Sigma\to\mathcal{A}^t$ ($t\ge 0$)
be the natural projection onto the first $t$ coordinates.
For $a\in\mathcal{A}^t$, let $[a]$ denotes the set of all sequences $\alpha\in\Sigma$
and all words $b\in\mathcal{A}^*$
\emph{starting with $a$} (i.e.,~$\pi_t(\alpha)=\pi_t(b)=a$).

On $\Sigma$ and on every $\mathcal{A}^t$ define
addition in a natural way with carry from left to~right; the sets
$\Sigma$ and $\mathcal{A}^t$ equipped with this operation are abelian groups.
Identify $10^\infty\in\Sigma$ and every $10^{t-1}\in\mathcal{A}^t$ ($t\ge 1$) with integer $1$,
and inductively define
$\alpha+n$, $a+n$ for $\alpha\in\Sigma$, $a\in\mathcal{A}^t$, and $n\in\mathbb{Z}$.

For $t\ge 0$ write
$$
Q_t = \bigsqcup_{a\in\mathcal{A}^t} K_a
\qquad
\text{where } K_{0^t+i} =  f^i(J_t)
\text{ for every } i\in[0,p_t).
$$
Notice that every $K_a$ ($a\in\mathcal{A}^t)$ is a nondegenerate closed
$p_t$-periodic interval $[y_a,z_a]$,
and $K_{b}\subseteq K_{a}$ for every $b\in[a]$.
We can also write
$$
Q = \bigsqcup_{\alpha\in\Sigma} K_\alpha
\qquad\text{where }
K_\alpha = \bigcap_{t=0}^\infty K_{\pi_t(\alpha)}
\text{ for every } \alpha\in\Sigma.
$$
Here, every $K_\alpha$ is either a singleton $\{y_\alpha\}$ or a
nondegenerate closed interval $[y_\alpha,z_\alpha]$.

Fix a continuous map $f:I\to I$ and a point $x\in I$, and put $x_i=f^i(x)$ for every $i\geq 0$.
Take $m\in\mathbb{N}$ and $t\geq 0$. For all $a,b\in\mathcal{A}^t$, put
\begin{equation*}
\begin{split}
\dist_m(K_a,K_b) &= \max_{i\in\mathbb{Z}_m} \dist\left(K_{a+i},K_{b+i}\right),\\
\diam_m(K_a,K_b) &= \max_{i\in\mathbb{Z}_m} \diam\left(K_{a+i}\cup K_{b+i}\right).
\end{split}
\end{equation*}
Note that, due to $p_t$-periodicity of intervals $K_a$, $\dist_m=\dist_{p_t}$ and $\diam_m=\diam_{p_t}$
for every $m\geq p_t$.
For $\varepsilon>0$, define
\begin{equation*}
\begin{split}
	N_m(x,t,\varepsilon) &= \left\{(a,b)\in \mathcal{A}^t\times\mathcal{A}^t\colon \dist_m(K_a,K_b) < \varepsilon\right\},
	\\
	N_m^\circ(x,t,\varepsilon) &= \left\{(a,b)\in \mathcal{A}^t\times\mathcal{A}^t\colon \diam_m(K_a, K_b) \le \varepsilon\right\}.
\end{split}
\end{equation*}

The following lemma is a combination of~Lemmas~2.3~and~4.2 from~\cite{Spi19}.

\begin{lemma}\label{L:solenoidal}
	Let $x\in I$ be such that $\omega_f(x)$ is solenoidal and $Q=\bigcap Q_t$ be as defined above. Then,
	\begin{enumerate}[label=(\alph*), ref=(\alph*)]
		\item\label{it:L:solenodial_iterate} for every $t$, there is $n_0$ such that $f^n(x)\in Q_t$ for every $n\ge n_0$;
		\item\label{it:L:solenodial_boundary_c} for all $m\in\mathbb{N}, t\geq0$ and $\varepsilon>0$,
		$$N_m^\circ(x,t,\varepsilon)\cdot p_t^{-2} \leq \underline{c}_m(x,\varepsilon)\leq \overline{c}_m(x,\varepsilon) \leq N_m(x,t,\varepsilon)\cdot p_t^{-2}.$$
	\end{enumerate}
\end{lemma}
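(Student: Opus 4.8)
The plan is to establish (a) first and then use it to convert the analytic recurrence condition $\varrho_m(x_i,x_j)\le\varepsilon$ into a purely combinatorial condition on the symbol words, after which (b) reduces to a counting/equidistribution argument.

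For (a), I would first record that $Q_t$ is forward invariant: since $f$ cyclically permutes the $p_t$ pairwise disjoint intervals $f^i(J_t)$, we have $f(K_a)=K_{a+1}$ for every $a\in\mathcal{A}^t$, and hence $f(Q_t)=Q_t$. Thus it suffices to show the trajectory enters $Q_t$ at least once. For this I would use that $\omega_f(x)$ is infinite. Indeed, a finite $\omega$-limit set is a periodic orbit, but $Q=\bigsqcup_{\alpha\in\Sigma}K_\alpha$ contains no periodic point: because $f(K_\alpha)\subseteq K_{\alpha+1}$, any periodic $y\in K_\alpha$ of period $q\ge1$ would satisfy $y=f^q(y)\in K_{\alpha+q}$, yet $\alpha+q\ne\alpha$ in the adding machine $\Sigma$, so $K_\alpha\cap K_{\alpha+q}=\emptyset$, a contradiction. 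Since the endpoints of the finitely many intervals $K_a$ $(a\in\mathcal{A}^t)$ form a finite set, the infinite set $\omega_f(x)\subseteq Q_t$ must meet the interior of some $K_{a_0}$ in a point $y$; as $y$ is a limit point of the trajectory, some $x_{n^\ast}\in K_{a_0}\subseteq Q_t$, and by forward invariance $x_n\in Q_t$ for all $n\ge n^\ast=:n_0$.

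For (b), fix $m$, $t$ and $\varepsilon$. By (a), for every $i\ge n_0$ there is a unique word $a(i)\in\mathcal{A}^t$ with $x_i\in K_{a(i)}$, and $a(i+1)=a(i)+1$; so the labels run cyclically through $\mathcal{A}^t\cong\mathbb{Z}_{p_t}$ with period $p_t$, whence each piece is visited with asymptotic frequency $1/p_t$, i.e.\ $\#\{i<n\colon a(i)=a\}/n\to p_t^{-1}$. The bridge to the symbolic data is the sandwich $\dist_m(K_{a(i)},K_{a(j)})\le\varrho_m(x_i,x_j)\le\diam_m(K_{a(i)},K_{a(j)})$, obtained coordinatewise from $x_{i+l}\in K_{a(i)+l}$, $x_{j+l}\in K_{a(j)+l}$ together with $\dist(K,L)\le\varrho(u,v)\le\diam(K\cup L)$ for $u\in K$, $v\in L$, and then taking the maximum over $0\le l<m$. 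Now I would count: discarding the at most $2n_0 n$ pairs with $\min(i,j)<n_0$ (which contribute $o(1)$ to $C_m(x,n,\varepsilon)$), group the rest by their label pair $(a(i),a(j))$. If $(a,b)\in N_m^\circ(x,t,\varepsilon)$, i.e.\ $\diam_m(K_a,K_b)\le\varepsilon$, then all of the $\sim n^2/p_t^2$ corresponding point-pairs are recurrent, giving $\underline{c}_m(x,\varepsilon)\ge \#N_m^\circ(x,t,\varepsilon)\cdot p_t^{-2}$; conversely any recurrent pair forces $\dist_m(K_{a(i)},K_{a(j)})\le\varepsilon$, confining recurrent pairs to label pairs in $N_m(x,t,\varepsilon)$ and yielding $\overline{c}_m(x,\varepsilon)\le \#N_m(x,t,\varepsilon)\cdot p_t^{-2}$. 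Letting $n\to\infty$ and invoking the equidistribution turns these cardinality estimates into the asserted inequalities.

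The routine parts here are the sandwich inequality and the negligibility of the first $n_0$ iterates. The point that needs care is the behaviour exactly at the threshold, which is precisely where the two one-sided counts differ: $N_m$ is defined with the strict inequality $\dist_m<\varepsilon$, whereas the recurrence condition $\varrho_m(x_i,x_j)\le\varepsilon$ only yields $\dist_m(K_{a(i)},K_{a(j)})\le\varepsilon$ directly. To land strictly inside $N_m$ one must rule out that a recurrent pair realizes $\dist_m=\varepsilon$, using that the $K_a$ are nondegenerate and that the closest-point distance $\dist$ is attained only at the pieces' endpoints, which the orbit meets only exceptionally; I expect reconciling this strict-versus-nonstrict threshold bookkeeping to be the main obstacle.
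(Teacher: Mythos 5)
Your proposal cannot be compared against an in-paper argument, because the paper does not prove this lemma at all: it imports it as ``a combination of Lemmas~2.3 and~4.2 from~\cite{Spi19}''. Judged on its own terms, your part~(a) is complete and correct (forward invariance of $Q_t$, absence of periodic points in $Q$, hence infiniteness of $\omega_f(x)$, hence an interior point of some $K_{a_0}$ lies in $\omega_f(x)$ and the orbit must enter it), and in part~(b) the skeleton --- the label sequence $a(i)$ with $a(i+1)=a(i)+1$, the exact frequency $p_t^{-1}$ of each label, the sandwich $\dist_m(K_{a(i)},K_{a(j)})\le\varrho_m(x_i,x_j)\le\diam_m(K_{a(i)},K_{a(j)})$, the discarding of the $O(n_0n)$ initial pairs --- is sound, and the lower bound via $N_m^\circ$ is complete, since there both inequalities involved are non-strict.

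The genuine gap is exactly the one you flag and then leave open: the upper bound. A recurrent pair only gives $\dist_m(K_{a(i)},K_{a(j)})\le\varepsilon$, while membership in $N_m(x,t,\varepsilon)$ requires strict inequality, and your remark that the orbit meets endpoints ``only exceptionally'' is an expectation, not an argument. The missing idea that closes it is orbit injectivity: since $\omega_f(x)$ is solenoidal it is infinite (you proved this in part~(a)), so $x$ is not eventually periodic and the trajectory $(x_n)$ never repeats a point; in particular it visits any given point of $I$ at most once. Now suppose $(i,j)$ is a recurrent pair with $i,j\ge n_0$ and $\dist_m(K_{a(i)},K_{a(j)})=\varepsilon$. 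Choosing $l^*<m$ with $\dist(K_{a(i)+l^*},K_{a(j)+l^*})=\varepsilon$, the sandwich forces $\varrho(x_{i+l^*},x_{j+l^*})=\dist(K_{a(i)+l^*},K_{a(j)+l^*})$, and for an order-preserving metric (the standing assumption of the section; for a general compatible metric the distance between two disjoint closed intervals need not be attained only at their facing endpoints, and this step would break) this equality forces $x_{i+l^*}$ and $x_{j+l^*}$ to be precisely the two facing endpoints of those intervals. Each of these two endpoints is visited at most once by the orbit, so each threshold label pair $(a,b)$ produces at most one pair $(i,j)$, hence at most $p_t^2$ such pairs altogether, contributing at most $p_t^2/n^2\to0$ to $C_m(x,n,\varepsilon)$. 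With this observation all remaining recurrent pairs have labels in $N_m(x,t,\varepsilon)$, and your counting then yields $\overline{c}_m(x,\varepsilon)\le\#N_m(x,t,\varepsilon)\cdot p_t^{-2}$, completing the proof.
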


Our main result of the present paper concerning correlation sums says that if $\omega$-limit set of $x$ is solenoidal, then the asymptotic correlation sums \emph{always} exist. The~theorem also presents a formula for their computation with respect to distances or to diameters of intervals forming the $\omega$-limit set. Since the proof uses the result from~Proposition~\ref{Prop:border}, one need to assume that a metric $\varrho$ is order-preserving and compatible with the Euclidean topology. \begin{thm}\label{Thm:solenoidal}
	Let $I$ be equipped with an order-preserving metric compatible with the Euclidean topology. Let $(I,f)$ be a dynamical system and assume that $x\in I$ is such that $\omega_f(x)$ is solenoidal. Then, for all $m\in\mathbb{N}$ and $\varepsilon>0$,
	$$\underline{c}_m(x,\varepsilon) = \overline{c}_m(x,\varepsilon);$$
	i.e., the asymptotic correlation sum exists. Moreover,
	$$c_m(x,\varepsilon) = \lim\limits_{t\to\infty}\frac{\#N_m^\circ(x,t,\varepsilon)}{p_t^{2}} = \lim\limits_{t\to\infty}\frac{\#N_m(x,t,\varepsilon)}{p_t^{2}}.$$
\end{thm}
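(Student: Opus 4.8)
The plan is to combine the two-sided estimate of Lemma~\ref{L:solenoidal}\ref{it:L:solenodial_boundary_c}, which holds at \emph{every} level $t$, with a bound showing that the gap between its two bounding quantities closes as $t\to\infty$. Abbreviate $u_t=\#N_m^\circ(x,t,\varepsilon)/p_t^2$ and $v_t=\#N_m(x,t,\varepsilon)/p_t^2$, so that the lemma reads
\begin{equation*}
u_t\le\underline{c}_m(x,\varepsilon)\le\overline{c}_m(x,\varepsilon)\le v_t\qquad\text{for all }t.
\end{equation*}
If I can show that $v_t-u_t\to 0$, then both $\lim_t u_t$ and $\lim_t v_t$ exist, coincide, and equal the common value $\underline{c}_m(x,\varepsilon)=\overline{c}_m(x,\varepsilon)$, which is exactly the assertion.

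The crux is the estimate $\#N_m(x,t,\varepsilon)-\#N_m^\circ(x,t,\varepsilon)=O(p_t)$, and this is where Proposition~\ref{Prop:border} enters. First note $N_m^\circ\subseteq N_m$ (if $\diam_m(K_a,K_b)\le\varepsilon$ then, by nondegeneracy of the intervals $K_c$, $\dist_m(K_a,K_b)<\varepsilon$), so the difference equals $\#(N_m\setminus N_m^\circ)$. Order the $p_t$ disjoint intervals $\{K_a:a\in\mathcal{A}^t\}$ along the line as $J_1<\dots<J_{p_t}$, identify $\mathcal{A}^t$ with $\{1,\dots,p_t\}$ via this order, and form $\mathcal{I}_{p_t}(\varepsilon)$ accordingly. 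If $(a,b)\in N_m\setminus N_m^\circ$, then $\dist(K_{a+i},K_{b+i})<\varepsilon$ for every $i\in\mathbb{Z}_m$, while $\diam(K_{a+i^*}\cup K_{b+i^*})>\varepsilon$ for some $i^*\in\mathbb{Z}_m$; for that $i^*$ the pair $(a+i^*,b+i^*)$ satisfies $\dist<\varepsilon<\diam$ and hence lies in $\mathcal{I}_{p_t}(\varepsilon)$. Since for each fixed $i$ the shift $(a,b)\mapsto(a+i,b+i)$ is a bijection of $\mathcal{A}^t\times\mathcal{A}^t$, the preimage of $\mathcal{I}_{p_t}(\varepsilon)$ under it has exactly $\#\mathcal{I}_{p_t}(\varepsilon)$ elements. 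As every pair of $N_m\setminus N_m^\circ$ is captured by at least one of the $m$ shifts, Proposition~\ref{Prop:border} gives
\begin{equation*}
\#\bigl(N_m\setminus N_m^\circ\bigr)\le m\cdot\#\mathcal{I}_{p_t}(\varepsilon)\le 4m\,(p_t-1).
\end{equation*}
Because the $p_t$ are strictly increasing positive integers, $p_t\to\infty$, so $v_t-u_t\le 4m(p_t-1)/p_t^2<4m/p_t\to 0$.

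Finally I would run the squeeze. From $u_t\le\underline{c}_m(x,\varepsilon)$ for all $t$ we get $\limsup_t u_t\le\underline{c}_m(x,\varepsilon)$, and from $\overline{c}_m(x,\varepsilon)\le v_t$ we get $\overline{c}_m(x,\varepsilon)\le\liminf_t v_t$; since $v_t-u_t\to0$ we also have $\liminf_t v_t=\liminf_t u_t\le\limsup_t u_t$. Chaining these,
\begin{equation*}
\limsup_t u_t\le\underline{c}_m(x,\varepsilon)\le\overline{c}_m(x,\varepsilon)\le\liminf_t v_t=\liminf_t u_t\le\limsup_t u_t,
\end{equation*}
so all terms are equal; hence $\lim_t u_t=\lim_t v_t$ exists and equals $\underline{c}_m(x,\varepsilon)=\overline{c}_m(x,\varepsilon)=:c_m(x,\varepsilon)$. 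I expect the genuine obstacle to be the counting step of the second paragraph --- correctly reducing the $m$-metric sets $N_m$ and $N_m^\circ$ to the single-configuration quantity $\#\mathcal{I}_{p_t}(\varepsilon)$ controlled by Proposition~\ref{Prop:border}, using the group structure of $\mathcal{A}^t$ and the shift bijections --- whereas the opening sandwich and the closing squeeze are essentially bookkeeping.
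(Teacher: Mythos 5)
Your proof is correct and is essentially the paper's own argument: the sandwich of Lemma~\ref{L:solenoidal}\ref{it:L:solenodial_boundary_c}, the bound $\#\bigl(N_m(x,t,\varepsilon)\setminus N_m^\circ(x,t,\varepsilon)\bigr)\le 4m\,(p_t-1)$ obtained by covering the difference set with the $m$ shifted copies of $\mathcal{I}_{p_t}(\varepsilon)$ and invoking Proposition~\ref{Prop:border}, followed by a squeeze as $p_t\to\infty$. The only cosmetic deviation is that the paper transfers the sandwich from $f^n(x)\in Q_t$ back to $x$ via Lemma~\ref{L:solenoidal}\ref{it:L:solenodial_iterate} and Lemma~\ref{L:iteration}, whereas you apply Lemma~\ref{L:solenoidal}\ref{it:L:solenodial_boundary_c} to $x$ directly (which its statement permits), and your closing paragraph makes explicit the ``moreover'' formula that the paper leaves implicit.
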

\begin{proof}
	Fix $m, t\in\mathbb{N}$, $\varepsilon>0$ and $x\in I$ with solenoidal $\omega_f(x)$. We use the notation as above. By Lemma~\ref{L:solenoidal}\ref{it:L:solenodial_iterate}, there is a nonnegative integer $n_0$ such that $f^n(x)\in Q_t$ for every $n\ge n_0$. Therefore, by Lemma~\ref{L:solenoidal}\ref{it:L:solenodial_boundary_c}, the inequalities
	$$\#N_m^\circ\left(x,t,\varepsilon\right)\cdot p_t^{-2} \leq \underline{c}_m\left(f^n(x),\varepsilon\right)\leq \overline{c}_m\left(f^n(x),\varepsilon\right) \leq \#N_m\left(x,t,\varepsilon\right)\cdot p_t^{-2}$$
	hold for every $n\ge n_0$. Moreover, by Lemma~\ref{L:iteration}, asymptotic correlation sums do not depend on the first finitely many iterates, so we have
	\begin{equation}\label{eq:Thm_solenoidal_c_boundary}
		\#N_m^\circ\left(x,t,\varepsilon\right)\cdot p_t^{-2} \leq \underline{c}_m\left(x,\varepsilon\right)\leq \overline{c}_m\left(x,\varepsilon\right) \leq \#N_m\left(x,t,\varepsilon\right)\cdot p_t^{-2}.
	\end{equation}
	
	For $i\in\mathbb{Z}_m$, put
	\begin{equation*}
	A_i = \left\{(a,b)\in\mathcal{A}^t\times\mathcal{A}^t\colon \dist_m\left(K_a, K_b\right) <\varepsilon < \diam\left(K_{a+i}\cup K_{b+i}\right) \right\}.
	\end{equation*}
	Trivially,
	\begin{equation}\label{eq:Thm_solenoidal_c_cupA}
		N_m(x,t,\varepsilon) \setminus N_m^\circ(x,t,\varepsilon) = \bigcup_{i=0}^{m-1}A_i.
	\end{equation}
	Moreover,
	\begin{equation*}
	A_i \subseteq \left\{(a,b)\in\mathcal{A}^t\times\mathcal{A}^t\colon \dist\left(K_{a+i}, K_{b+i}\right) <\varepsilon < \diam\left(K_{a+i}\cup K_{b+i}\right) \right\}.
	\end{equation*}
	for every $i\in\mathbb{Z}_m$, and therefore, by~Proposition~\ref{Prop:border},
	\begin{equation*}
	\#\left(\bigcup_{i=0}^{m-1}A_i\right) \leq 4m\left(p_t-1\right).
	\end{equation*}
	So, by~\eqref{eq:Thm_solenoidal_c_boundary} and~\eqref{eq:Thm_solenoidal_c_cupA},
	\begin{equation*}
	\begin{split}
	0 &\leq \overline{c}_m\left(x,\varepsilon\right) -\underline{c}_m\left(x,\varepsilon\right) \leq \limsup\limits_{t\to\infty}\left\vert\frac{\#N_m(x,t,\varepsilon) -\#N_m^\circ(x,t,\varepsilon)}{p_t^{2}}\right\vert\\
	&\leq \limsup\limits_{t\to\infty}\left\vert\frac{4m\left(p_t-1\right)}{p_t^{2}}\right\vert = 0.
	\end{split}
	\end{equation*}
\end{proof}

\section{Correlation sums - finite case}\label{S:finite}
In this section, we study asymptotic correlation sums for points having finite $\omega$-limit sets. For a similar result, see~\cite[Lemma~4.1]{Maj16} and~\cite[Proposition~4.1]{Spi19}. 

\begin{thm}\label{Thm:finite_c}
	Let $(I,f)$ be a dynamical system and $x\in I$ be such that $\omega_f(x)$ is finite. Then, for every $m\in\mathbb{N}$ and all but finitely many $\varepsilon>0$,
	$$\underline{c}_m(x,\varepsilon) = \overline{c}_m(x,\varepsilon) .$$
	More precisely, if $\omega_f(x) = \left\{y_0, y_1,\dots, y_{p-1}\right\}$ and
	$$0< \varepsilon\not\in\left\{\varrho_m\left(y_i, y_j\right)\colon 0\leq i,j<p\right\},$$ then the asymptotic correlation sum exists and
	\begin{equation}\label{eq:Thm:finite_c_value}
		c_m(x,\varepsilon) = \frac1{p^2}\#\left\{(i,j)\in\mathbb{Z}_p\times\mathbb{Z}_p\colon\ \varrho_m\left(y_i, y_j\right)\leq\varepsilon\right\}.
	\end{equation}
\end{thm}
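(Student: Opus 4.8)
The plan is to reduce everything to the fact that a finite $\omega$-limit set is a single periodic orbit which the trajectory shadows with a definite phase, and then to a counting argument with an $O(n)$ error term. First I would recall the standard fact that if $\omega_f(x)$ is finite then it is a single periodic orbit. Indeed, $f(\omega_f(x))=\omega_f(x)$ always, so $f$ restricts to a permutation of the finite set $\omega_f(x)$; its points are isolated, so for small $\delta$ the balls $B(y_i,\delta)$ are pairwise disjoint and, by continuity, forward-consistent with that permutation, while each $y_j$ being a genuine limit point forces the trajectory to visit every ball infinitely often, so the permutation is a single $p$-cycle. Relabelling so that $f(y_i)=y_{(i+1)\bmod p}$ and using Lemma~\ref{L:iteration} to shift the phase (replacing $x$ by $f^h(x)$ leaves all asymptotic quantities, the set $\omega_f(x)$, and the formula unchanged), I may assume $\varrho(x_n,y_{n\bmod p})\to 0$ as $n\to\infty$, where $x_n=f^n(x)$.

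Next I would exploit the hypothesis that $\varepsilon$ avoids the finite set $\{\varrho_m(y_a,y_b):0\le a,b<p\}$. Put $\eta=\min_{a,b}|\varrho_m(y_a,y_b)-\varepsilon|>0$. Since $\varrho_m(x_i,x_j)=\max_{0\le k<m}\varrho(x_{i+k},x_{j+k})$ and, for large indices, $\varrho(x_l,y_{l\bmod p})$ is small, the triangle inequality gives, for each $k<m$, that $\varrho(x_{i+k},x_{j+k})$ differs from $\varrho(y_{(i+k)\bmod p},y_{(j+k)\bmod p})$ by less than $\eta$ once $i,j$ are large; taking the maximum over $k$ yields $|\varrho_m(x_i,x_j)-\varrho_m(y_{i\bmod p},y_{j\bmod p})|<\eta$. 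Choosing $n_1$ so that $\varrho(x_l,y_{l\bmod p})<\eta/2$ for all $l\ge n_1$, this shows that for all $i,j\ge n_1$ the recurrence condition $\varrho_m(x_i,x_j)\le\varepsilon$ holds \emph{exactly when} $\varrho_m(y_{i\bmod p},y_{j\bmod p})\le\varepsilon$, the $\eta$-gap ruling out any boundary ambiguity.

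The counting step is then routine. Splitting the index square $\{0\le i,j<n\}$ into the pairs meeting $\{i<n_1\}\cup\{j<n_1\}$ (at most $2n_1 n=O(n)$ of them) and the rest, on the remainder the recurrence condition depends only on the residues $(i\bmod p,\,j\bmod p)$. For each residue pair $(a,b)$ the number of admissible $i,j\in[n_1,n)$ is $n/p+O(1)$ in each coordinate, so that pair contributes $n^2/p^2+O(n)$. Summing over the residue pairs $(a,b)$ with $\varrho_m(y_a,y_b)\le\varepsilon$ gives
$$\#\{(i,j):0\le i,j<n,\ \varrho_m(x_i,x_j)\le\varepsilon\}=\frac{n^2}{p^2}\#\{(a,b)\in\mathbb{Z}_p\times\mathbb{Z}_p:\varrho_m(y_a,y_b)\le\varepsilon\}+O(n),$$
and dividing by $n^2$ shows that $C_m(x,n,\varepsilon)$ converges as $n\to\infty$ to the value in~\eqref{eq:Thm:finite_c_value}; in particular $\underline{c}_m(x,\varepsilon)=\overline{c}_m(x,\varepsilon)$. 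Since the excluded thresholds form the finite set $\{\varrho_m(y_a,y_b):0\le a,b<p\}$ (at most $p^2$ values), the statement holds for all but finitely many $\varepsilon$. The one place demanding care is the first paragraph: establishing that the finite $\omega$-limit set is a single cycle shadowed with a fixed phase, so that $\varrho_m(x_i,x_j)$ can be compared coordinatewise to the constant $\varrho_m(y_{i\bmod p},y_{j\bmod p})$. Once that shadowing is in hand, the $\eta$-gap argument and the residue count are mechanical.
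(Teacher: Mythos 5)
Your proof is correct and follows essentially the same route as the paper's: both identify the finite $\omega$-limit set as a single periodic orbit shadowed in phase by the trajectory, use the hypothesis that $\varepsilon$ avoids the values $\varrho_m(y_i,y_j)$ together with the triangle inequality to show that for large indices the recurrence condition $\varrho_m(f^i(x),f^j(x))\le\varepsilon$ depends only on the residues $(i\bmod p, j\bmod p)$, and then conclude by counting residue pairs (via Lemma~\ref{L:iteration}). The only differences are cosmetic: you use a uniform gap $\eta$ where the paper argues pairwise with $\delta=\varrho_m(y_i,y_j)$, and you spell out the $O(n)$ counting step that the paper leaves implicit.
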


\begin{proof}
	Let $m\in\mathbb{N}$ and $x\in I$ be such that $\omega_f(x)$ has finite cardinality $p\in\mathbb{N}$. Since the elements of $\omega_f(x)$ forms a periodic orbit, we can denote them by $y_0, y_1,\dots, y_{p-1}$ such that
	\begin{equation}\label{eq:Thm:finite_c_orbit}
		\lim\limits_{n\to\infty}f^{pn+i}(x) = y_i \quad\text{ for every $i\in \mathbb{Z}_p$}.
	\end{equation}
	
	Fix arbitrary $\varepsilon>0$ and $i,j\in \mathbb{Z}_p$, and put $\delta = \varrho_m\left(y_i, y_j\right)$. Suppose that $\delta<\varepsilon$. By~\eqref{eq:Thm:finite_c_orbit}, there is $n_0$ such that
	\begin{equation*}
		\varrho_m\left(f^{pn+i}(x),y_i\right)<\frac{\varepsilon-\delta}{2} \quad\text{ and } \quad \varrho_m\left(f^{pn+j}(x),y_j\right)<\frac{\varepsilon-\delta}{2}
	\end{equation*}
	for every $n\geq n_0$. Thus, for any $k,l\geq n_0$, we have
	\begin{equation*}
		\varrho_m\left(f^{pk+i}(x), f^{pl+j}(x)\right) \leq \varrho_m\left(f^{pk+i}(x), y_i\right) + \varrho_m\left(y_i, y_j\right) + \varrho_m\left(f^{pl+j}(x), y_j\right) < \varepsilon.
	\end{equation*}
	
	On the other hand, if $\delta>\varepsilon$ then, by~\eqref{eq:Thm:finite_c_orbit}, there is $n_0^\prime$ such that
	\begin{equation*}
	\varrho_m\left(f^{pn+i}(x),y_i\right) < \frac{\delta-\varepsilon}{2} \quad\text{ and } \quad \varrho_m\left(f^{pn+j}(x),y_j\right) < \frac{\delta-\varepsilon}{2}
	\end{equation*}
	for every $n\geq n_0^\prime$. Thus, for any $k,l\geq n_0^\prime$, we have
	\begin{equation*}
	\varrho_m\left(f^{pk+i}(x), f^{pl+j}(x)\right) \geq \varrho_m\left(y_i, y_j\right) - \varrho_m\left(f^{pk+i}(x), y_i\right) - \varrho_m\left(f^{pl+j}(x), y_j\right) > \varepsilon.
	\end{equation*}
	So, for any $i,j\in \mathbb{Z}_p$, we get $\varrho_m\left(y_i, y_j\right) <\varepsilon$ if and only if $\varrho_m\left(f^{pk+i}(x), f^{pl+j}(x)\right) <\varepsilon$ for all sufficiently large $k,l\geq0$. This result and Lemma~\ref{L:iteration} immediately imply the~assertion.
\end{proof}

\begin{rem}\label{rem:singleton-EPer}
	By~Theorem~\ref{Thm:finite_c}, if $x\in I$ is such that $\omega_f(x)$ is a singleton, then the~asymptotic correlation sum $c_m(x,\varepsilon)$ exists and is equal to $1$ for \emph{every} $\varepsilon>0$. Further, it is trivial to show that also for every $x\in\EPer(f)$ the asymptotic correlation sum $c_m(x,\varepsilon)$ exists and~\eqref{eq:Thm:finite_c_value} is valid for all $m\in\mathbb{N}$ and $\varepsilon>0$. However, in general, Theorem~\ref{Thm:finite_c} cannot be strengthened to hold for \emph{every} $\varepsilon>0$, as is shown in~the~following proposition.
\end{rem}

\begin{prop}\label{Prop:c}
	There are a continuous map $f\colon I\to I$ with zero topological entropy (in fact, of type $2$ for Sharkovsky's order), a point $x_0\in I$ with finite $\omega_f\left(x_0\right)$, $m\in\mathbb{N}$, and $\varepsilon>0$ such that $$\underline{c}_m\left(x_0,\varepsilon\right) < \overline{c}_m\left(x_0,\varepsilon\right).$$
\end{prop}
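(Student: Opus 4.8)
The plan is to turn the single exceptional value that Theorem~\ref{Thm:finite_c} is forced to discard into a genuine obstruction. I take $m=1$ and build a map whose only nontrivial $\omega$-limit set is a $2$-cycle $\{y_0,y_1\}$ with $y_0<y_1$, choosing the threshold $\varepsilon:=\varrho(y_0,y_1)=:d$, i.e.\ exactly the forbidden value $\varrho_1(y_0,y_1)$. For a point $x_0$ with $\omega_f(x_0)=\{y_0,y_1\}$ the even iterates tend to $y_0$ and the odd ones to $y_1$; arranging both to approach from above, I write $x_{2k}=y_0+u_k$ and $x_{2l+1}=y_1+v_l$ with $u_k,v_l\downarrow 0$ and $u_k,v_l<d$. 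Then $x_{2k}<x_{2l+1}$ always, so $\varrho(x_{2k},x_{2l+1})=d+v_l-u_k$, and at the threshold a mixed-parity pair is counted exactly when $v_l\le u_k$, whereas every equal-parity pair is counted (its distance is $<d$). Hence, with $n=2K$,
\[
C_1(x_0,2K,d)=\frac{2K^2+2\,g(K)}{(2K)^2}=\frac12+\frac{g(K)}{2K^2},
\qquad g(K)=\#\{(k,l)\colon k,l<K,\ v_l\le u_k\},
\]
with no error term, so everything reduces to making $g(K)/K^2$ oscillate.

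Because $(v_l)$ decreases, $\{l\colon v_l\le u_k\}=\{l\ge\phi(k)\}$ for the nondecreasing threshold $\phi(k)=\min\{l\colon v_l\le u_k\}\le k$, giving $g(K)=K^2-\sum_{k<K}\phi(k)$. I then design $\phi$ by blocks: fix boundaries $0=K_0<K_1<\cdots$ growing so fast that $K_t\ge t\,K_{t-1}^2$, and set $\phi(k)=k$ on the odd-numbered blocks $[K_{t-1},K_t)$ and $\phi(k)=K_{t-1}$ (flat) on the even-numbered ones. This $\phi$ is nondecreasing with $\phi(k)\le k$, hence realizable by two suitably interleaved strictly decreasing null sequences $(u_k),(v_l)$; the flat blocks simply let the even iterates ``stall'' at an earlier $v$-level while the odd iterates race ahead. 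A direct estimate using the fast growth shows that along $K=K_T$ one has $K_T^{-2}\sum_{k<K_T}\phi(k)\to 1/2$ for odd $T$ and $\to 0$ for even $T$, so $g(K_T)/K_T^2\to 1/2$ (odd $T$) and $\to 1$ (even $T$). The displayed identity then gives $C_1(x_0,2K_T,d)\to 3/4$ for odd $T$ and $\to 1$ for even $T$, whence $\underline{c}_1(x_0,d)\le 3/4<1=\overline{c}_1(x_0,d)$.

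It remains to realize this as an honest system. I fix disjoint closed intervals $U_0=[y_0,a]$ and $U_1=[y_1,b]$ with $y_0<a<y_1<b$ and $b<2y_1-y_0$ (the latter to keep all $u_k,v_l<d$), place the prescribed points $x_{2k}\in(y_0,a]$ and $x_{2l+1}\in(y_1,b]$ decreasing to the endpoints, and define $f$ by $f(x_n)=x_{n+1}$, $f(y_0)=y_1$, $f(y_1)=y_0$. Since both prescribed sequences are monotone, $f$ extends to a map increasing on each of $U_0,U_1$ with $f(U_0)=U_1$ and $f(U_1)=U_0$, and then piecewise linearly to all of $I$ with a single fixed point outside $U_0\cup U_1$. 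Consequently $f^2$ is increasing on each $U_i$ with the orbit of $x_0$ converging monotonically to $y_i$, so $\omega_f(x_0)=\{y_0,y_1\}$, as required.

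I expect the main obstacle to be this last realization step: verifying that the interleaving prescription for $(u_k)$ and $(v_l)$ can be met by strictly decreasing sequences that are simultaneously compatible with one monotone-on-pieces continuous extension, and that this extension creates no periodic points beyond the fixed point and the $2$-cycle, so that $f$ is of type $2$ for Sharkovsky's order and therefore has zero topological entropy. By contrast, the asymptotic bookkeeping is clean: once the orbit is fixed, the counting identity above is exact, and the block sums are controlled directly by the growth condition $K_t\ge t\,K_{t-1}^2$.
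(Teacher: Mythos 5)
Your proposal is correct and is essentially the paper's own construction: both take $m=1$, an orbit spiralling into a $2$-cycle $\{y_0,y_1\}$ with the exceptional threshold $\varepsilon=\varrho(y_0,y_1)$, make the even/odd approach rates alternate over fast-growing blocks so that the count of mixed-parity near pairs oscillates, and realize the prescribed orbit by a map that is increasing on two intervals around the cycle, linear with a single repelling fixed point in the gap between them, and constant elsewhere --- which is exactly how the paper's Step~4 disposes of the ``no other periodic points, hence type $2$ and zero entropy'' verification you flagged as the main obstacle. The differences are only in bookkeeping: the paper's points approach the cycle from below in blocks of $2^k$ points, yielding $\underline{c}_1\leq 7/10<8/10\leq\overline{c}_1$, while your threshold function $\phi$ with diagonal/flat blocks yields $\underline{c}_1\leq 3/4<1=\overline{c}_1$.
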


\begin{proof}
Put $y_0=1/4$, $y_1=3/4$ and $\varepsilon = 1/2$. Fix a point $x_0\in\left(0, y_0\right)$. We construct a continuous map $f\colon I\to I$ with zero topological entropy such that $\omega_f\left(x_0\right) = \left\{y_0,y_1\right\}$, and then we show that $\underline{c}_1\left(x_0,\varepsilon\right) < \overline{c}_1\left(x_0,\varepsilon\right)$ (note that this does not contradict~Theorem~\ref{Thm:finite_c} since $\varepsilon=\left\vert y_1-y_0\right\vert$). We proceed in several steps.

\smallskip
\emph{Step~1. Construction of intervals $I_n, J_n$.} In this step, we construct intervals whose union will contain the trajectory of $x_0$.
\smallskip 

Inductively construct compact intervals $I_n=\left[a_n, b_n\right], J_n=\left[c_n, d_n\right]$ ($n\geq0$) such that $x_0\in I_0$ and, for every $n\in\mathbb{N}_0$,
\begin{enumerate}[label=(\roman*), ref=(\roman*)]
	\item\label{Pr:it:subset} $I_n\subset \left(0,y_0\right)$, and $J_n\subset \left(y_0,y_1\right)$;
	\item\label{Pr:it:dist} $\dist\left(I_{2n}, J_{2n}\right)>\varepsilon$;
	\item\label{Pr:it:diam} $\diam\left(I_{2n+1}\cup J_{2n+1}\right)<\varepsilon$;
	\item\label{Pr:it:order} $I_0 < I_1 < \dots < I_n$ and $J_0 < J_1 < \dots < J_n$;
	\item\label{Pr:it:lim} $\lim\limits_{n\to\infty}a_n=\lim\limits_{n\to\infty}b_n = y_0$, and $\lim\limits_{n\to\infty}c_n=\lim\limits_{n\to\infty}d_n = y_1$.
\end{enumerate}
Note that, for $s<t$, $\dist\left(I_s, J_t\right)>\dist\left(I_{s+1}, J_{s+1}\right)$ and $\dist\left(I_s, J_t\right)>\dist\left(I_{s}, J_{s}\right)$. Therefore, by~\ref{Pr:it:dist},
\begin{equation}\label{eq:Prop:dist}
\dist\left(I_s,J_t\right)>\varepsilon \qquad\text{for any } s<t.
\end{equation}
Similarly, for any $s>t$, $\diam\left(I_s\cup J_t\right) < \diam\left(I_s\cup J_s\right)$ and $\diam\left(I_s\cup J_t\right) < \diam\left(I_{s-1}\cup J_{s-1}\right)$. Therefore, by~\ref{Pr:it:diam},
\begin{equation}\label{eq:Prop:diam}
\diam\left(I_s\cup J_t\right)<\varepsilon \qquad\text{for any } s>t. 
\end{equation}

\smallskip
\emph{Step~2. Construction of points~$x_n$ in $I$.} Here, we construct points that will form the trajectory of $x_0$.
\smallskip

For every $k\in\mathbb{N}$, fix $2^k$ points in $I_k$. From left to right, denote these points by~$x_{2i}$ where $i\in\left\{2^k-1,\dots, 2\cdot\left(2^k-1\right)\right\}$. Analogously, for every $k\in\mathbb{N}_0$, fix $2^k$ points in $J_k$, and, from left to right, denote these points by~$x_{2i+1}$ where $i\in\left\{2^k-1,\dots, 2\cdot\left(2^k-1\right)\right\}$. Trivially, by~\ref{Pr:it:order}~and~\ref{Pr:it:lim}, $\left(x_{2i}\right)_i$ increases to $y_0$ and $\left(x_{2i+1}\right)_i$ increases to $y_1$.

\smallskip
\emph{Step~3. Definition of a continuous map $f\colon I\to I$.} In this step, we define a~continuous map $f\colon I\to I$. We use the notation $f\colon[a,b]\nearrow[c,d]$ to denote that $f(a)=c, f(b)=d$, and $f$ is continuous and increasing on $[a,b]$. Analogously, we write $f\colon[a,b]\searrow[c,d]$ to denote that $f(a)=d, f(b)=c$, and $f$ is continuous and decreasing on $[a,b]$.
\smallskip

Define $f\colon I\to I$ such that:
\begin{itemize}
	\item $f\colon [x_0, y_0]\nearrow [x_1,y_1]$ where $f\left(x_n\right) = x_{n+1}$ for every even $n\in\mathbb{N}_0$;
	\item $f\colon [x_1, y_1]\nearrow [x_2,y_0]$ where $f\left(x_n\right) = x_{n+1}$ for every odd $n\in\mathbb{N}_0$;
	\item $f(x)=x_1$ for every $x\in\left[0,x_0\right)$;
	\item $f(x)=y_0$ for every $x\in\left(y_1,1\right]$;
	\item $f\colon [y_0, x_1]\searrow [x_2,y_1]$ is linear with constant slope.
\end{itemize}
Obviously, such a continuous map $f$ exists. Moreover, since $f\left(\left[y_0, x_1\right]\right) = \left[x_2, y_1\right] \supset \left[y_0, x_1\right]$, $f$ has a fixed point $z\in \left(y_0, x_1\right)$.

\smallskip
\emph{Step~4. Proof of the fact that $\omega_f\left(x_0\right)$ is finite and entropy of $f$ is zero.} We show that
\begin{equation}\label{eq:Prop:omega}
	\omega_f(x)=\left\{y_0,y_1\right\}\qquad\text{for any } x\in I\setminus\{z\}.
\end{equation}
Therefore, $f$ is of type $2$ for Sharkovsky's order and hence not Li-Yorke chaotic with zero entropy.
\smallskip

Fix $x\in I$. We distinguish the following cases for $x$; in each of them we show that \eqref{eq:Prop:omega} holds.
\begin{itemize}
	\item If $x\in \left\{y_0, y_1\right\}$, then \eqref{eq:Prop:omega} holds trivially.
	\item If $x>y_1$, then $f(x)=y_0$, and so $\omega_f(x)=\omega_f\left(y_0\right)=\left\{y_0, y_1\right\}$.
	\item If $x\in \left[x_0, y_0\right)$, then $f^n(x)\in \left[x_n, y_{n \mmod 2}\right)$ for every $n\in\mathbb{N}_0$. Similarly, if $x\in \left[x_1, y_1\right)$, then $f^n(x)\in \left[x_{n+1}, y_{n+1 \mmod 2}\right)$ for every $n\in\mathbb{N}_0$. Therefore, in~both cases, $\omega_f(x) = \left\{y_0,y_1\right\}$.
	\item If $x<x_0$, then $f(x)=x_1$, and so $\omega_f(x)=\omega_f\left(x_1\right)=\left\{y_0, y_1\right\}$.
	\item Finally, suppose that $x\in \left(y_0, x_1\right)\setminus\{z\}$. On $\left(y_0, x_1\right)$, $f$ is linear with constant slope $\gamma$. Therefore, for every $n\in\mathbb{N}$ with $f^n(x)\in \left(y_0, x_1\right)$, the equality $\left\vert f^n(x)-z\right\vert = \vert\gamma\vert^n\cdot\left\vert x-z\right\vert$ holds. Since $f\left(y_0\right)=y_1$ and $f\left(x_1\right)=x_2$, $\gamma=\left(y_1-x_2\right) / \left(y_0-x_1\right)$. Moreover, $y_1-x_2>x_1-y_0$, and so $\gamma<-1$. This means that there is $n\in\mathbb{N}$ such that $f^n(x)\not\in \left(y_0, x_1\right)$. Therefore, with~respect to the previous cases, \eqref{eq:Prop:omega} holds.
\end{itemize}

\smallskip
\emph{Step~5. Proof of the fact that $\underline{c}_1\left(x_0,\varepsilon\right) < \overline{c}_1\left(x_0,\varepsilon\right)$; recall that $\varepsilon=\left\vert y_1-y_0\right\vert=1/2$.} In this step, we prove that $\underline{c}_1\left(x_0,\varepsilon\right)\leq 7/10$ and $8/10\leq\overline{c}_1\left(x_0,\varepsilon\right)$. Therefore, $\underline{c}_1\left(x_0,\varepsilon\right) < \overline{c}_1\left(x_0,\varepsilon\right)$.
\smallskip

By~\ref{Pr:it:dist},~\ref{Pr:it:diam},~\eqref{eq:Prop:dist} and~\eqref{eq:Prop:diam}, we have that for $i,j\in\mathbb{N}_0$, $\left\vert x_i-x_j\right\vert\leq \varepsilon$ if and only if one of the following conditions holds:
\begin{itemize}
	\item $i,j$ are both even or both odd;
	\item there is an odd $s$ such that either $x_i\in I_s$ and $x_j\in J_s$, or vice versa;
	\item there are $s,t$ such that $s>t$ and either $x_i\in I_s$ and $x_j\in J_t$, or vice versa.
\end{itemize}
Using these conditions, we derive a formula for $C_1(x_0,n,\varepsilon)$ with $n=2\cdot\left(2^{k+1}-1\right)$, $k\in\mathbb{N}$:
\begin{equation*}
\begin{split}
	C_1(x_0,n,\varepsilon) &= \frac{1}{n^2}\left[ 2\cdot\left(\frac{n}{2}\right)^2 + 2\sum_{\substack{ {s\leq k}\\ {s \text{ is odd}}}} 2^{2s} + 2\sum_{s=1}^k \sum_{t=0}^{s-1}2^{s+t} \right]\\
	&= \frac{2}{n^2}\left[\left(2^{k+1}-1\right)^2 + \frac{4}{15}\left(16^{\left\lceil{\frac{k}{2}}\right\rceil}-1 \right) + \frac{4^{k+1}-3\cdot2^{k+1}+2}{3} \right].
\end{split}
\end{equation*}
Hence, for $k=2l$ and $n=2\cdot\left(2^{k+1}-1\right)$, we have
\begin{equation*}
\underline{c}_1\left(x_0,\varepsilon\right) \leq \lim\limits_{l\to\infty} \frac{2}{n^2}\left[\left(2^{2l+1}-1\right)^2 + \frac{4}{15}\left(16^{l}-1 \right) + \frac{4^{2l+1}-3\cdot2^{2l+1}+2}{3} \right]=\frac{7}{10},
\end{equation*}
and for $k=2l+1$ and $n=2\cdot\left(2^{k+1}-1\right)$,
\begin{equation*}
\begin{split}
\overline{c}_1\left(x_0,\varepsilon\right) &\geq \lim\limits_{l\to\infty} \frac{2}{n^2}\left[\left(2^{2l+2}-1\right)^2 + \frac{4}{15}\left(16^{l+1}-1 \right) + \frac{4^{2l+2}-3\cdot2^{2l+2}+2}{3} \right]\\
&= \frac{8}{10}.
\end{split}
\end{equation*}
Therefore, $\underline{c}_1\left(x_0,\varepsilon\right) < \overline{c}_1\left(x_0,\varepsilon\right)$.
\end{proof}

\begin{rem}
	In~Step~4 of the previous proof, we showed that $f$ is of type $2$ for Sharkovsky's order. Note that, by~Theorem~\ref{Thm:finite_c}, no map $f$ of type $1$ for Sharkovsky's order satisfies the assertion of Proposition~\ref{Prop:c}.
\end{rem}

\section{Recurrence determinism}\label{S:determinism}

The following theorem says that trajectories of points with finite $\omega$-limit sets under a continuous interval map are perfectly predictable in the -- arbitrarily large -- finite horizon (see also~\cite[Lemma~4.2]{Maj16} and~\cite[Proposition~4.1]{Spi19}).

\begin{thm}
	Let $x\in I$ be such that $\omega_f(x)$ is finite. Then, for every $m\in\mathbb{N}$,
	$$\lim\limits_{\varepsilon\to 0}\rdet_m(x,\varepsilon) = 1.$$ More precisely, if $\omega_f(x) = \left\{y_0, y_1,\dots, y_{p-1}\right\}$ and
	\begin{equation}\label{eq:T:det_eps}
	0<\varepsilon < \min\left\{\varrho\left(y_i,y_j\right)\colon i\neq j\right\},
	\end{equation}
	then the asymptotic recurrence $m$-determinism exists and
	$$\rdet_m(x,\varepsilon) = 1.$$
\end{thm}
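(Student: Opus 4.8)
The plan is to reduce the statement to Theorem~\ref{Thm:finite_c}. By definition $\rdet_m(x,n,\varepsilon)=C_m(x,n,\varepsilon)/C_1(x,n,\varepsilon)$, so if both $C_m(x,\cdot,\varepsilon)$ and $C_1(x,\cdot,\varepsilon)$ converge as $n\to\infty$ and the limit of the denominator is nonzero, then $\rdet_m(x,\varepsilon)$ exists and equals $c_m(x,\varepsilon)/c_1(x,\varepsilon)$. Thus it suffices to evaluate the two asymptotic correlation sums under hypothesis~\eqref{eq:T:det_eps} and to check that they coincide. Since $\omega_f(x)$ is finite with cardinality $p$, its points $y_0,\dots,y_{p-1}$ are distinct and form a periodic orbit, which is exactly the setting of Theorem~\ref{Thm:finite_c}.

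The key step is to observe that the smallness condition~\eqref{eq:T:det_eps} collapses the relevant counting set to the diagonal, uniformly in $m$. Indeed, recall that $\varrho_m(y_i,y_j)=\max_{0\le k<m}\varrho\left(f^k(y_i),f^k(y_j)\right)$, so taking $k=0$ gives $\varrho_m(y_i,y_j)\ge\varrho(y_i,y_j)$. If $\varrho_m(y_i,y_j)\le\varepsilon$, then $\varrho(y_i,y_j)\le\varepsilon<\min\{\varrho(y_a,y_b)\colon a\neq b\}$, forcing $i=j$; conversely $\varrho_m(y_i,y_i)=0\le\varepsilon$. Hence $\varrho_m(y_i,y_j)\le\varepsilon$ if and only if $i=j$, and in particular for $i\neq j$ we have $\varrho_m(y_i,y_j)>\varepsilon$, so $\varepsilon\notin\{\varrho_m(y_i,y_j)\colon 0\le i,j<p\}$ and the hypothesis of Theorem~\ref{Thm:finite_c} is satisfied.

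I would then apply Theorem~\ref{Thm:finite_c} twice, to the given $m$ and to $m=1$. In both cases the counting set in~\eqref{eq:Thm:finite_c_value} is precisely the diagonal $\{(i,i)\colon i\in\mathbb{Z}_p\}$, of cardinality $p$, so that $c_m(x,\varepsilon)=p/p^2=1/p=c_1(x,\varepsilon)$. In particular both asymptotic correlation sums exist and the denominator limit $c_1(x,\varepsilon)=1/p$ is strictly positive, so $\rdet_m(x,\varepsilon)$ exists and equals $c_m(x,\varepsilon)/c_1(x,\varepsilon)=1$. Finally, every $\varepsilon$ with $0<\varepsilon<\min\{\varrho(y_i,y_j)\colon i\neq j\}$ satisfies this, i.e. the equality $\rdet_m(x,\varepsilon)=1$ holds for all sufficiently small $\varepsilon>0$, which yields $\lim_{\varepsilon\to0}\rdet_m(x,\varepsilon)=1$.

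The argument is short and presents no real obstacle; the only point requiring care is that $\rdet_m$ is a quotient, so one must verify that the denominator's limit is nonzero before passing to the limit of the ratio --- this is precisely the diagonal contribution $c_1(x,\varepsilon)=1/p>0$. The whole content lies in the diagonal-collapse observation, which is exactly what makes the value independent of $m$.
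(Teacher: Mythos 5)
Your proposal is correct and follows essentially the same route as the paper: reduce to Theorem~\ref{Thm:finite_c}, use the smallness of $\varepsilon$ to show that the counting set in~\eqref{eq:Thm:finite_c_value} is exactly the diagonal for every $m$, so that $c_m(x,\varepsilon)=c_1(x,\varepsilon)=1/p>0$ and the ratio is $1$. The only cosmetic difference is how the bound $\varrho_m(y_i,y_j)>\varepsilon$ for $i\neq j$ is obtained --- you use the monotonicity $\varrho_m\geq\varrho$ (the $k=0$ term of the maximum), while the paper uses the inclusion $\left\{\varrho_m\left(y_i,y_j\right)\colon i\neq j\right\}\subseteq\left\{\varrho\left(y_a,y_b\right)\colon a\neq b\right\}$ coming from the periodic-orbit structure; both are one-line arguments giving the same conclusion.
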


\begin{proof}
	Let $m\in\mathbb{N}$ and $x\in I$ be such that $\omega_f(x) = \left\{y_0,y_1,\dots,y_{p-1}\right\}$ for some $p\in\mathbb{N}$, and fix $\varepsilon>0$ such that \eqref{eq:T:det_eps} holds. Trivially,
	$$\left\{\varrho_m\left(y_i,y_j\right)\colon i\neq j\right\} \subseteq \left\{\varrho\left(y_i,y_j\right)\colon i\neq j\right\},$$
	and so
	\begin{equation}\label{eq:T:det_min}
		\min\left\{\varrho_m\left(y_i,y_j\right)\colon i\neq j\right\} \geq \min\left\{\varrho\left(y_i,y_j\right)\colon i\neq j\right\}>\varepsilon>0.
	\end{equation}
	Using this fact, the asymptotic correlation sums $c_1(x,\varepsilon)$ and $c_m(x,\varepsilon)$ exist by~Theorem~\ref{Thm:finite_c}, and
	\begin{equation*}
		c_1(x,\varepsilon) = \#A_1\cdot p^{-2}, \qquad c_m(x,\varepsilon) = \#A_m\cdot p^{-2}
	\end{equation*}
	where
	\begin{equation}\label{eq:T:det_A}
	\begin{split}
	A_1 &= \left\{(i,j)\in\mathbb{Z}_p\times\mathbb{Z}_p\colon\ \varrho\left(y_i, y_j\right)\leq\varepsilon\right\},\\
	A_m &= \left\{(i,j)\in\mathbb{Z}_p\times\mathbb{Z}_p\colon\ \varrho_m\left(y_i, y_j\right)\leq\varepsilon\right\}.
	\end{split}
	\end{equation}
	Hence,
	\begin{equation}\label{eq:T:det_frac}
	\rdet_m(x,\varepsilon) = \frac{c_m(x,\varepsilon)}{c_1(x,\varepsilon)} = \frac{\#A_m}{\#A_1}.
	\end{equation}
	Moreover, from~\eqref{eq:T:det_min} and~\eqref{eq:T:det_A} we get $\#A_1 = \#A_m = p$.
	Now~\eqref{eq:T:det_frac} yields the~assertion.
\end{proof}

Now, let us introduce some notation required in the next proposition, where we give an example of a not Li-Yorke chaotic interval map $f$ and a point $x$ with the solenoidal $\omega$-limit set such that
 \begin{equation*}
 	\liminf\limits_{\varepsilon\to0}\rdet_m(x,\varepsilon)\leq2/3
 	\qquad\text{and}\qquad
 	\liminf\limits_{\varepsilon\to0}\lim\limits_{n\to\infty}\DET_m(x,n,\varepsilon) \leq2/3
 \end{equation*}
for any positive integer $m$ greater than one.

Put $\mathcal{A}^t=\{0,1\}^t$ for $t\ge 1$ and $\mathcal{A}^*=\bigcup_{t}\mathcal{A}^t$. We say that a system $\mathcal{K}=\{K_a\colon a\in\mathcal{A}^*\}$ of non-degenerate closed subintervals $K_a$ of $I$ is \emph{admissible} if the following conditions hold:
\begin{itemize}
	\item
	$\min K_0=0,\ \max K_1=1\ $ and $\ K_0<K_1$;
	\item
	$\min K_{a0}=\min K_a,\ \max K_{a1}=\max K_a\ $ and $\ K_{a0}<K_{a1}\ $ for every $a\in\mathcal{A}^*$;
	\item
	$\nu_t = \max\{\diam K_a\colon a\in\mathcal{A}^t\}$ converges to zero as $t\to\infty$.
\end{itemize}
Put $Q_t=\bigsqcup_{a\in\mathcal{A}^t} K_a$ and $Q=Q(\mathcal{K})=\bigcap Q_t$; note that $Q$ is
a Cantor set.

\begin{prop}\label{Prop:rdet}
	There are a continuous map $f\colon I\to I$ with zero topological entropy (actually, not Li-Yorke chaotic) and a point $x\in I$ with solenoidal $\omega_f\left(x\right)$ such that for every $m\in\mathbb{N}\setminus\{1\}$,
	\begin{equation*}
		\liminf\limits_{\varepsilon\to0}\rdet_m(x,\varepsilon)<1
		\qquad\text{and}\qquad
		\liminf\limits_{\varepsilon\to0}\lim\limits_{n\to\infty}\DET_m(x,n,\varepsilon) <1.
	\end{equation*}
\end{prop}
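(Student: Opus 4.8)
The plan is to realize a binary adding machine on a deliberately \emph{asymmetric} Cantor set, so that the induced dynamics alternately expands and contracts Euclidean distances; the single unavoidable expansion then costs a definite proportion of the recurrences at arbitrarily small scales.

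\emph{Construction.} Fix $0<p<q$ with $p+q<1$ and $0<\lambda<1/2$, and build $\mathcal{K}$ so that $|K_0|=p$ and $|K_1|=q$ (so the central gap is $g=1-p-q$), while \emph{inside} each of $K_0,K_1$ the splitting is the symmetric middle-gap construction of ratio $\lambda$ (each interval yielding two children of relative length $\lambda$ at its ends). Then $|K_a|=r_{a_0}\lambda^{|a|-1}$ with $r_0=p,\ r_1=q$, so $\nu_t\to0$ and $Q=\bigcap Q_t$ is a Cantor set. I define $f$ to realize the period-doubling (adding-machine) permutation $a\mapsto a+1$ (carry from left to right): $f|_{K_0}$ is the affine increasing bijection onto $K_1$, necessarily of constant slope $q/p>1$, and $f|_{K_1}$ is the odometer bijection onto $K_0$ (local slope $p/q<1$), extended continuously across the gaps and outside $[K_0,K_1]$ so that every point is attracted to $Q$. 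One checks $f|_{K_0}$ maps $K_{0w}$ affinely onto $K_{1w}$ at every level, so it is globally affine. Then $\omega_f(x)=Q$ is solenoidal with $p_t=2^t$ and $J_t=K_{0^t}$, and, being an adding machine, $f$ is equicontinuous on $Q$, hence not Li-Yorke chaotic and of zero entropy. Parameters are chosen so that $(q/p)^{d_0}\ge 2$, where $d_0=\log 2/\log(1/\lambda)$ is the correlation dimension of the symmetric ratio-$\lambda$ set; for instance $p=\tfrac18,\ q=\tfrac12,\ \lambda=\tfrac14$ gives $d_0=\tfrac12$ and $(q/p)^{d_0}=2$.

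\emph{Reduction and the bound on $\rdet_m$.} By Theorem~\ref{Thm:solenoidal} the asymptotic correlation sums exist and $\rdet_m(x,\varepsilon)=c_m(x,\varepsilon)/c_1(x,\varepsilon)$, with $c_m(x,\varepsilon)=(\mu\times\mu)\{(u,v):\max_{0\le i<m}\dist(\phi^iu,\phi^iv)<\varepsilon\}$, where $\phi=f|_Q$ and $\mu$ is the balanced measure on $Q$. Writing $\gamma$ for the correlation integral of the symmetric ratio-$\lambda$ set, self-affinity gives, for $\varepsilon<g$, the \emph{exact} identity $c_1(x,\varepsilon)=\tfrac14\gamma(\varepsilon/p)+\tfrac14\gamma(\varepsilon/q)$, the cross block $K_0\times K_1$ vanishing since those pairs are at distance $\ge g$. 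Because $f|_{K_0}$ is affine with slope $q/p$, a pair in $K_0\times K_0$ at distance $\delta$ lies at distance $(q/p)\delta$ one step later, so it is counted by $c_2$ only when $\delta<\varepsilon p/q$, contributing exactly $\tfrac14\gamma(\varepsilon/q)$, while the $K_1\times K_1$ block contributes at most its time-$0$ count $\tfrac14\gamma(\varepsilon/q)$. Hence $c_2\le\tfrac12\gamma(\varepsilon/q)$ and $\rdet_2(x,\varepsilon)\le 2/(1+R(\varepsilon))$ with $R(\varepsilon)=\gamma(\varepsilon/p)/\gamma(\varepsilon/q)$. Renewal theory applied to $\gamma(\varepsilon)=\tfrac12\gamma(\varepsilon/\lambda)$ yields the two-sided estimate $\gamma(\varepsilon)\asymp\varepsilon^{d_0}$, whence $\limsup_{\varepsilon\to0}R(\varepsilon)\ge(q/p)^{d_0}\ge2$ and $\liminf_{\varepsilon\to0}\rdet_2(x,\varepsilon)\le 2/3$. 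Since $\rdet_m\le\rdet_2$, this gives $\liminf_{\varepsilon\to0}\rdet_m(x,\varepsilon)\le 2/3<1$ for every $m\ge2$.

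\emph{The determinism and the main obstacle.} For $\DET_m$ I would use that $\DET_m(x,n,\varepsilon)$, being the proportion of recurrences lying on diagonal segments of length at least $m$, is nonincreasing in $m$; thus $\lim_{n}\DET_m\le\lim_{n}\DET_2=2\rdet_2-\rdet_3$ by~\eqref{eq:Intro_det}, and it suffices to bound $2\rdet_2-\rdet_3$ away from $1$ along the same extremal scales $\varepsilon_k$. The mechanism is that the alternation ``expand on $K_0$, contract on $K_1$'' makes the two-step Bowen radius coincide with the one-step radius for every pair whose orbit avoids a discontinuity of $f|_{K_1}$ (a carry of the odometer) during the first two iterates, so $\rdet_3$ falls short of $\rdet_2$ only by the measure of pairs that meet such a carry; I would show this deficit is at most $1-\rdet_2$, forcing $2\rdet_2-\rdet_3<1$. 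The hard part is exactly this last estimate. The one-step bound was \emph{exact} precisely because $f|_{K_0}$ is affine, whereas the second iterate involves $f|_{K_1}$, whose carry discontinuities accumulate at the top of $K_1$; the pairs straddling them at a given step are of the same order $\varepsilon^{d_0}$ as $c_1$ itself, so they cannot be dismissed as lower-order and the correct constant must be extracted — most cleanly by iterating the self-affine renewal recursion for the joint law of $\bigl(\dist(u,v),\dist(\phi u,\phi v),\dist(\phi^2u,\phi^2v)\bigr)$ to evaluate $c_3$ and to verify its stability along the sequence $\varepsilon_k$ realizing $R(\varepsilon_k)\to(q/p)^{d_0}$.
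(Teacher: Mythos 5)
Your construction and the first half of the argument (the bound on $\rdet_m$) take a genuinely different route from the paper: the paper prescribes the \emph{diameters} of all level-$t$ intervals to be $r^{-t}$ or $2r^{-t}$ according to the first letter of the word and then counts recurrent pairs purely combinatorially, whereas you put the asymmetry into the first-level lengths $p<q$ and exploit affinity of $f|_{K_0}$ together with the self-similar renewal structure of the correlation integral. That half is essentially workable, with two caveats. First, the step from $\gamma(\varepsilon)\asymp\varepsilon^{d_0}$ to $\limsup_{\varepsilon\to0}R(\varepsilon)\ge(q/p)^{d_0}$ does not follow from a two-sided bound alone: $\gamma$ carries a multiplicatively periodic factor, so the ratio $\gamma(\varepsilon/p)/\gamma(\varepsilon/q)$ can oscillate strictly below $(q/p)^{d_0}$; your argument is rescued only by the special choice $q/p=1/\lambda$, which makes the periodic factors cancel identically, and this should be said explicitly. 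Second, ``$f$ is equicontinuous on $Q$, hence not Li--Yorke chaotic'' is not a proof for the \emph{interval} map $f$ (chaos could be created by the extension off $Q$); as in the paper, one should arrange the extension to be conjugate to a Delahaye map.

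The genuine gap is the $\DET_m$ half, and it is not a deferrable detail: it is exactly the point where an upper bound on $\rdet_2$ proves nothing. Since $\rdet_3\le\rdet_2$, identity \eqref{eq:Intro_det} gives $\lim_{n\to\infty}\DET_2(x,n,\varepsilon)=2\rdet_2(x,\varepsilon)-\rdet_3(x,\varepsilon)\ge\rdet_2(x,\varepsilon)$, and this can equal $1$ even when $\rdet_2=2/3$ (e.g.\ if $\rdet_3=1/3$). So you must prove the strict lower bound $\rdet_3(x,\varepsilon_k)>2\rdet_2(x,\varepsilon_k)-1$ along the same scales, i.e.\ control $c_3$ (in fact $c_2$ as well, since you only have an upper bound for it), and in your model this forces you through the non-affine odometer branch $f|_{K_1}$ with its accumulating carry discontinuities, whose straddling pairs have measure of the same order $\varepsilon^{d_0}$ as $c_1$ — precisely the computation you leave open. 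The paper's construction is engineered so that this problem never arises: at scale $\varepsilon_k=r^{-k}$, every non-diagonal recurrent pair of level-$(k+1)$ intervals consists of the two children of a parent $K_c$ with first letter $0$ and $\diam K_c=\varepsilon_k$; one application of the odometer sends it to the two children of a parent with first letter $1$ and diameter $2\varepsilon_k$, and since the two children of any parent span it, the image pair has diameter $2\varepsilon_k>\varepsilon_k$. Hence $\#N_m^\circ(x,k+1,\varepsilon_k)=2^{k+1}$ for \emph{every} $m\ge2$, so $\rdet_m(x,\varepsilon_k)=\rdet_{m+1}(x,\varepsilon_k)=2/3$ exactly, and \eqref{eq:Intro_det} yields $\lim_{n\to\infty}\DET_m=m\cdot\tfrac23-(m-1)\cdot\tfrac23=\tfrac23$ with no further estimate. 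To salvage your approach you would need an analogous ``one-step destruction detected by $\diam$ at the same scale for all non-diagonal pairs'' feature, or an exact evaluation of $c_2$ and $c_3$ along $\varepsilon_k$; neither is in the proposal.
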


\begin{proof}
	We give an example of a continuous map $f\colon I\to I$ with zero topological entropy and the unique infinite $\omega$-limit set. Then we prove that for any $x\in I$ with such $\omega$-limit set, $\liminf\nolimits_{\varepsilon\to0}\rdet_m(x,\varepsilon)\leq 2/3$ and $\liminf\nolimits_{\varepsilon\to0}\lim\nolimits_{n\to\infty}\DET_m(x,n,\varepsilon) \leq 2/3$ for every $m\in\mathbb{N}\setminus\{1\}$. The proof is divided into 5 steps.
	
	\smallskip
	\emph{Step~1. Definition of a continuous map $f\colon I\to I$ with zero entropy.}
	\smallskip
	
	Fix an integer $r\geq5$. Let $\mathcal{K}$ be the admissible system such that for every $t\in\mathbb{N}$ and $a=a_0a_1\dots a_{t-1}\in\mathcal{A}^t$,
	\begin{equation}\label{eq:Prop:rdet:diam}
	\diam(K_a)=
	\begin{cases}
	r^{-t} & \text{if } a_0=0,\\
	2r^{-t} & \text{if } a_0=1.
	\end{cases}
	\end{equation}
	Clearly, $\mathcal{K}$ is uniquely defined. It is easy to see that
	\begin{equation}\label{eq:Prop:rdet:dist01}
	\dist\left(K_0,K_1\right)=1-3/r
	\end{equation}
	and, for any $t\in\mathbb{N}$ and $a=a_0a_1\dots a_{t-1}\in\mathcal{A}^t$,
	\begin{equation}\label{eq:Prop:rdet:dist}
	\dist\left(K_{a0}, K_{a1}\right)=
	\begin{cases}
	(r-2) r^{-(t+1)} & \text{if } a_0=0,\\
	2(r-2) r^{-(t+1)} & \text{if } a_0=1.\\
	\end{cases}
	\end{equation}
	Obviously, there is a continuous map $f\colon I\to I$ such that
	\begin{equation}\label{eq:Prop:rdet:fKa}
	f\left(K_a\right)=K_{a+1}\  \text{ for every } a\in\mathcal{A}^*.
	\end{equation}
	With respect to~\eqref{eq:Prop:rdet:diam},
	$$\lim\limits_{t\to\infty}\max\limits_{a\in\mathcal{A}^t}\diam\left(K_a\right) =0.$$
	Therefore, $f$ is conjugate to the map from~\cite{Del80}, and so $f$ is not Li-Yorke chaotic and $Q(\mathcal{K})$ is the only infinite $\omega$-limit set of $f$. Moreover, $Q(\mathcal{K})$ is solenoidal and minimal, so $\omega_f(x)=Q(\mathcal{K})$ for every $x\in Q(\mathcal{K})$.
	Fix $x\in Q(\mathcal{K})$, $k\in\mathbb{N}$, $m\in\mathbb{N}\setminus\{1\}$ and put $t=k+1$. Let $\left(\varepsilon_n\right)_{n=1}^{\infty}$ be the sequence of~reals with $\varepsilon_n=r^{-n}$ for every $n\in\mathbb{N}$. 
	
	\smallskip
	\emph{Step~2. Proof of the fact that $N_1^\circ\left(x,t,\varepsilon_k\right)=3\cdot 2^k$ for $t=k+1$.}
	\smallskip
	
	Recall that $N_1^\circ\left(x,t,\varepsilon_k\right) = \left\{(a,b)\in \mathcal{A}^t\times\mathcal{A}^t\colon\ \diam(K_a\cup K_b) \le \varepsilon_k\right\}$. We may write
	\begin{equation}\label{eq:Prop:rdet:N1}
	N_1^\circ\left(x,t,\varepsilon_k\right) = A_0\left(x,t,\varepsilon_k\right) \sqcup A_1\left(x,t,\varepsilon_k\right)
	\end{equation}
	where
	\begin{equation*}
	\begin{split}
	A_0\left(x,t,\varepsilon_k\right) &= \left\{(a,a)\colon a\in\mathcal{A}^t,\quad \diam\left(K_a\right)\leq \varepsilon_k\right\},\\
	A_1\left(x,t,\varepsilon_k\right) &= \left\{(a,b)\in\mathcal{A}^t\times\mathcal{A}^t\colon a\neq b,\quad \diam\left(K_a\cup K_b\right)\leq \varepsilon_k\right\}.
	\end{split}
	\end{equation*}
	Using \eqref{eq:Prop:rdet:diam} we have that for every $a\in\mathcal{A}^t$,
	\begin{equation}\label{eq:Prop:det:every}
	\diam\left(K_a\right)\leq 2r^{-t}<\varepsilon_k,
	\end{equation}
	and so
	\begin{equation}\label{eq:Prop:det:A0}
	\#A_0\left(x,t,\varepsilon_k\right) = 2^t.
	\end{equation}
	For distinct $a,b\in\mathcal{A}^t$, exactly one of the following four cases is true:
	\begin{enumerate}[label=(\roman*), ref=(\roman*)]
		\item\label{it:Prop:0} \emph{$K_a\cup K_b\subseteq K_c$ for some $c=0c_1c_2\dots c_{t-2}\in\mathcal{A}^{t-1}$:} then \eqref{eq:Prop:rdet:diam} yields that $\diam\left(K_a\cup K_b\right) = \diam\left(K_c\right) = r^{-(t-1)} = \varepsilon_k$;
		\item\label{it:Prop:1} \emph{$K_a\cup K_b\subseteq K_c$ for some $c=1c_1c_2\dots c_{t-2}\in\mathcal{A}^{t-1}$:} then \eqref{eq:Prop:rdet:diam} yields that $\diam\left(K_a\cup K_b\right) = \diam\left(K_c\right) = 2r^{-(t-1)} > \varepsilon_k$;
		\item\label{it:Prop:01} \emph{either $K_a\cup K_b\subseteq K_0$ or $K_a\cup K_b\subseteq K_1$, and any from the previous cases is true:} then~\eqref{eq:Prop:rdet:dist} and the fact that $r\geq5$ yield that $\diam\left(K_a\cup K_b\right)\geq \dist\left(K_a, K_b\right)\geq (r-2)r^{-(t-1)}>\varepsilon_k$;
		\item\label{it:Prop:other} \emph{$K_a\subseteq K_0$ and $K_b\subseteq K_1$, or vice versa:} then~\eqref{eq:Prop:rdet:dist01} and the fact that $r\geq5$ yield that $\diam\left(K_a\cup K_b\right)\geq \dist\left(K_0, K_1\right) = 1-3/r>1/r=\varepsilon_1\geq\varepsilon_k$.
	\end{enumerate}
	Therefore, for distinct $a,b\in\mathcal{A}^t$, $\diam\left(K_a\cup K_b\right)\leq \varepsilon_k$ if and only if \ref{it:Prop:0} is true. There are $2^{t-2}$ pairwise distinct $c=0c_1c_2\dots c_{t-2}\in\mathcal{A}^{t-1}$ and for every such $c$ there are two intervals $K_a,K_b\in Q_t$ such that they both are subintervals of $K_c$. Hence,
	\begin{equation}\label{eq:Prop:det:A1}
	\#A_1\left(x,t,\varepsilon_k\right) = 2\cdot2^{t-2}.
	\end{equation}
	From~\eqref{eq:Prop:rdet:N1}, \eqref{eq:Prop:det:A0} and~\eqref{eq:Prop:det:A1} we have
	\begin{equation*}
	\#N_1^\circ\left(x,t,\varepsilon_k\right) = 3\cdot 2^{t-1} = 3\cdot 2^k.
	\end{equation*}
	
	\smallskip
	\emph{Step~3. Proof of the fact that $N_m^\circ\left(x,t,\varepsilon_k\right)= 2^{k+1}$ for $t=k+1$.}
	\smallskip
	
	Recall that $N_m^\circ\left(x,t,\varepsilon_k\right) = \left\{(a,b)\in \mathcal{A}^t\times\mathcal{A}^t\colon \diam_m(K_a, K_b) \le \varepsilon_k\right\}$. Thus, $$N_m^\circ\left(x,t,\varepsilon_k\right)\subseteq N_1^\circ\left(x,t,\varepsilon_k\right),$$ and using~\eqref{eq:Prop:rdet:N1},
	$$N_m^\circ\left(x,t,\varepsilon_k\right)\subseteq A_0\left(x,t,\varepsilon_k\right)\sqcup A_1\left(x,t,\varepsilon_k\right).$$
	Trivially, $A_0\left(x,t,\varepsilon_k\right)\subseteq N_m^\circ\left(x,t,\varepsilon_k\right)$ since~\eqref{eq:Prop:det:every} holds for every $a\in\mathcal{A}^t$.
	Moreover, we show that $A_1\left(x,t,\varepsilon_k\right)\cap N_m^\circ\left(x,t,\varepsilon_k\right) = \emptyset$, and hence using~\eqref{eq:Prop:det:A0}, $$\#N_m^\circ\left(x,t,\varepsilon_k\right) = \#A_0\left(x,t,\varepsilon_k\right) = 2^t = 2^{k+1}.$$ To see that, let $(a,b)\in A_1\left(x,t,\varepsilon_k\right)$. The proof of Step~2 implies that~\ref{it:Prop:0} is true. Thus,~\eqref{eq:Prop:rdet:fKa} yields that~\ref{it:Prop:1} is true for $a+1, b+1$, and so (again from the proof of~Step~2) $(a+1,b+1)\not\in A_1\left(x,t,\varepsilon_k\right)$. Therefore, $\varepsilon_k< \diam\left(K_{a+1}\cup K_{b+1}\right)\leq \diam_m\left(K_a,K_b\right)$, i.e., $(a,b)\not\in N_m^\circ\left(x,t,\varepsilon_k\right)$.

	\smallskip
	\emph{Step~4. Computation of $N_1^\circ\left(x,t,\varepsilon_k\right)$ and $N_m^\circ\left(x,t,\varepsilon_k\right)$ for $t\geq k+1$.}
	\smallskip
	
	Now, fix an integer $t\geq k+1$. Then, $(a,b)\in N_1^\circ\left(x,t,\varepsilon_k\right)$ if and only if there are -- not necessarily distinct -- $c,d\in\mathcal{A}^{k+1}$ such that $K_a\subseteq K_c, K_b\subseteq K_d$ and $(c,d)\in N_1^\circ\left(x,k+1,\varepsilon_k\right)$. Equivalently, the previous conditions are true if and only if there are $c,d\in\mathcal{A}^{k+1}$ such that $a=ca_{k+1}a_{k+2}\dots a_{t-1}$, $b=db_{k+1}b_{k+2}\dots b_{t-1}$ for any $a_i, b_i\in\{0,1\}$, $k+1\leq i < t$, and $(c,d)\in N_1^\circ\left(x,k+1,\varepsilon_k\right)$. Therefore, for such $(c,d)$, there are $2^{2(t-k-1)}$ pairs $(a,b)$ for which the above is true. One can get an~analogous result for $(a,b)\in N_m^\circ\left(x,t,\varepsilon_k\right)$. Hence, Steps~2 and~3 yield
		\begin{equation}\label{eq:Prop:det:values}
		\#N_1^\circ\left(x,t,\varepsilon_k\right) = 4^{t-(k+1)}\cdot 3\cdot 2^k\quad \text{ and }\quad \#N_m^\circ\left(x,t,\varepsilon_k\right) = 4^{t-(k+1)}\cdot 2^{k+1}.
		\end{equation}

	\smallskip
	\emph{Step~5. Proof that	 
	$\liminf\nolimits_{\varepsilon\to0}\rdet_m(x,\varepsilon)$ and $\liminf\nolimits_{\varepsilon\to0}\lim\nolimits_{n\to\infty}\DET_m(x,n,\varepsilon)$are smaller than one.}
	\smallskip
	
	By~Theorem~\ref{Thm:solenoidal}, the asymptotic correlation sum exists for all $m\in\mathbb{N}$ and $\varepsilon>0$, and
	$$c_m(x,\varepsilon) = \lim\limits_{t\to\infty}\frac{\#N_m^\circ(x,t,\varepsilon)}{2^{2t}},$$
	where, for every $t\in\mathbb{N}$, one can choose $p_t=2^t$ in the assertion of~the~theorem since $f$ has zero entropy and $\omega_f(x)$ is solenoidal (see \cite[Theorems~5.4 and~4.1(d)]{Blo95} and~\cite{Smi86}). Hence,
	\begin{equation*}
	\rdet_m(x,\varepsilon) = \frac{c_m(x,\varepsilon)}{c_1(x,\varepsilon)} = \lim\limits_{t\to\infty}\frac{\#N_m^\circ(x,t,\varepsilon)}{\#N_1^\circ(x,t,\varepsilon)}
	\end{equation*}
	for all $m\in\mathbb{N}$ and $\varepsilon>0$. 
	Thus, for $m\in\mathbb{N}\setminus\{1\}$, \eqref{eq:Prop:det:values} yields
	\begin{equation*}
	\liminf\limits_{\varepsilon\to0}\rdet_m(x,\varepsilon) \leq \lim\limits_{k\to\infty}\rdet_m\left(x,\varepsilon_k\right) = \frac{2}{3} <1.
	\end{equation*}
	Further, by~\eqref{eq:Intro_det},
	\begin{equation*}
	\begin{split}
		\liminf\limits_{\varepsilon\to0}\lim\limits_{n\to\infty}\DET_m(x,n,\varepsilon) & = \liminf\limits_{\varepsilon\to0} \left( m\cdot\rdet_m(x,\varepsilon) - (m-1)\cdot\rdet_{m+1}(x,\varepsilon) \right)\\
		& \leq \lim\limits_{k\to\infty} \left( m\cdot\rdet_m\left(x,\varepsilon_k\right) - (m-1)\cdot\rdet_{m+1}\left(x,\varepsilon_k\right) \right)\\
		& = \frac23 <1.
	\end{split}
	\end{equation*}
\end{proof}

\medskip
\noindent\emph{Acknowledgements.}
The author is very obliged to Vladim\'{i}r \v{S}pitalsk\'{y} for careful reading of the manuscript and giving substantive feedback. Furthermore, the author thanks Mat\'{u}\v{s} Dirb\'{a}k for useful comments.
This work was supported by VEGA grant 1/0158/20.


\end{document}